%
\documentclass[reqno]{amsart}
\oddsidemargin 0cm \evensidemargin 0cm \topmargin -0.5cm
\textwidth 15cm \textheight 21cm
\usepackage{times}

\usepackage{amsmath}
\usepackage{amsthm}
\usepackage{amsfonts}
\usepackage{graphicx,psfrag,epsfig}
\usepackage{color}




\newtheorem{thm}{Theorem}[section]
\newtheorem{lem}[thm]{Lemma}
\newtheorem{cor}[thm]{Corollary}
\newtheorem{prop}[thm]{Proposition}

\DeclareMathAlphabet{\mathpzc}{OT1}{pzc}{m}{it}

\numberwithin{equation}{section}

\newcommand{\R}{\mathbb{R}}

\newcommand{\e}{\varepsilon}
\newcommand{\rd}{\mathrm{d}}

\newcommand{\ue}{u_\varepsilon}

\newcommand{\Phie}{\Phi_\varepsilon}

\newcommand{\dhr}{\mathrel{\lhook\joinrel\relbar\kern-.8ex\joinrel\lhook\joinrel\rightarrow}} 

\title[A stationary free boundary problem for MEMS]
{A stationary free boundary problem modeling electrostatic MEMS}

\begin{document}

\author{Philippe Lauren\c{c}ot}
\address{Institut de Math\'ematiques de Toulouse, CNRS UMR~5219, Universit\'e de Toulouse \\ F--31062 Toulouse Cedex 9, France}
\email{laurenco@math.univ-toulouse.fr}

\author{Christoph Walker}
\address{Leibniz Universit\"at Hannover\\ Institut f\" ur Angewandte Mathematik \\ Welfengarten 1 \\ D--30167 Hannover\\ Germany}
\email{walker@ifam.uni-hannover.de}

\begin{abstract}
A free boundary problem describing small deformations in a membrane based model of electrostatically actuated MEMS is investigated. The existence of stationary solutions is established for small voltage values. A justification of the widely studied narrow-gap model is given by showing that steady state solutions of the free boundary problem converge toward stationary solutions of the narrow-gap model when the aspect ratio of the device tends to zero.
\end{abstract}

\keywords{MEMS, free boundary problem, small-aspect ratio limit}
\subjclass[2010]{35R35, 35J57, 35B30, 74F15}

\maketitle

\section{Introduction}

Microelectromechanical systems (MEMS) have become key components of many commercial systems, including accelerometers for airbag deployment in automobiles, ink jet printer heads, optical switches, micropumps, chemical sensors and many others. Idealized modern MEMS devices often consist of two components: a rigid ground plate and a thin and deformable elastic membrane that is held fixed along its boundary above the rigid plate, and its design is based on the interaction of electrostatic and elastic forces. More precisely, when a voltage difference is applied between the two components, a Coulomb force is induced which is varied in strength by varying the applied voltage and gives rise to deformations of the elastic membrane. Perhaps the most ubiquitous nonlinear phenomenon associated with electrostatically actuated MEMS devices is the so-called ``pull-in'' instability limiting the effectiveness of such devices. In this instability, when voltages are applied beyond a certain critical pull-in voltage, there is no longer a steady-state configuration of the device where the two components remain separate. This possible touchdown of the membrane on the ground plate affects the design of the devices as it severely restricts the range of stable operation. The understanding and control of the pull-in voltage instability are thus of great technological importance: in this connection, a large number of MEMS devices which rely on electrostatic actuation have been investigated both experimentally and through numerical simulations and several mathematical models describing these devices have been set up. 

\begin{figure}
\centering\includegraphics[width=10cm]{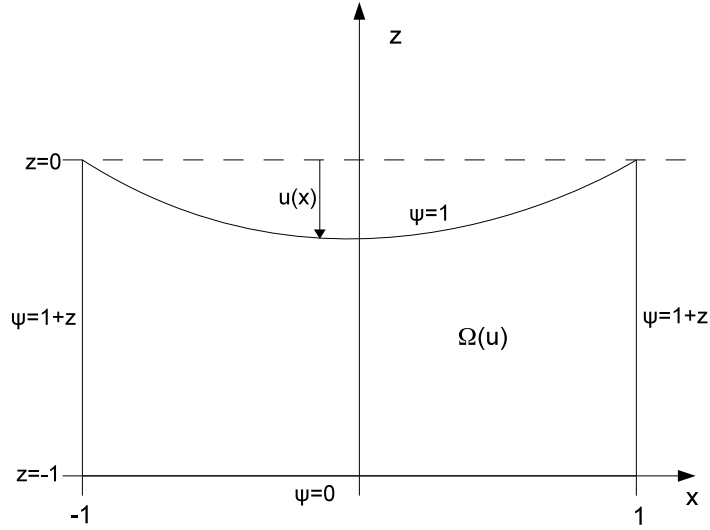}
\caption{\small Idealized electrostatic MEMS device.}\label{fig1}
\end{figure}

{We consider here a simple membrane based model of an electrostatically actuated MEMS device as depicted in Figure~\ref{fig1} and refer the reader e.g. to \cite{PeleskoSIAP02, BernsteinPelesko, PeleskoTriolo01} and the references therein for a more detailed account of the physical} background and the modeling aspects of modern MEMS devices. In this simplified situation, we assume that the applied voltage and the permittivity of the membrane are constant (normalized to one) and that there is no variation in the horizontal direction orthogonal to the $x$-direction of both the (dimensionless) electrostatic potential $\psi$ and the displacement $u$ of the membrane. Under appropriate scalings, the rigid ground plate is at $z=-1$ and the undeflected membrane at $z=0$ is fixed at the boundary $x=-1$ and $x=1$, see Figure~\ref{fig1}. Denoting the aspect ratio of the device, i.e. the ratio of the undeformed gap size to the device length, before scaling by $\varepsilon$, the dimensionless electrostatic potential $\psi=\psi(x,z)$ is supposed to satisfy Laplace's equation
\begin{equation}\label{psi}
\varepsilon^2\partial_x^2\psi + \partial_z^2\psi =0
\end{equation}
in the region $$\Omega(u) := \left\{ (x,z)\in (-1,1)\times (-1,\infty)\ :\ -1 < z < u(x) \right\}$$ between the rigid ground plate at $z=-1$ and the deflected membrane at $z=u$. The boundary conditions are then
\begin{equation}\label{a}
\psi=0\quad \text{on}\quad z=-1
\end{equation}
and
\begin{equation}\label{b}
\psi=1\quad \text{on}\quad z=u\ .
\end{equation}
As for the deformation of the membrane, it results from a balance between dynamic, electrostatic, and elastic forces and the membrane displacement $u=u(t,x)\in (-1,\infty)$ evolves according to Newton's law
\begin{equation}\label{u}
\alpha^2\partial_t^2 u + \partial_t u - \partial_x^2 u = - \lambda\ \left( \varepsilon^2\ |\partial_x\psi(x,u)|^2 + |\partial_z\psi(x,u)|^2 \right)
\end{equation}
with {clamped} boundary conditions
\begin{equation}\label{bcu}
u=0\quad\text{at}\quad x=\pm 1\ .
\end{equation}
In \eqref{u}, the term $\partial_t u$ and the right-hand side account for a damping force and the electrostatic force, respectively, while the term $\partial_x^2 u$ describes the deformation due to stretching. The latter is obtained after linearization resulting from the assumption of small deformations. The contribution to deformation due to bending may also be included in \eqref{u} by adding a fourth-order term $B \partial_x^4 u$, $B>0$, to the left-hand side of \eqref{u} but is neglected here. The parameter $\lambda\ge 0$ characterizes the relative strengths of electrostatic and mechanical forces. It acts as a control parameter proportional to the applied voltage. The coefficient $\alpha\ge 0$ is indirectly proportional to the damping coefficient.

Observe that \eqref{psi} is a free boundary problem as the domain between the rigid ground plate and the elastic membrane changes with time. Due to this, equations \eqref{psi} and \eqref{u} are strongly coupled. However, a common assumption made in all mathematical analysis hitherto is a small aspect ratio $\varepsilon$. Formally, sending $\varepsilon$ to zero allows one to solve explicitly \eqref{psi}-\eqref{b} for the potential $\psi=\psi_0$, i.e.
\begin{equation}\label{sar}
\psi_0(x,z)=\frac{1+z}{1+u(x)}\ , \quad (x,z)\in \Omega(u)\ ,
\end{equation}
thus reducing the free boundary problem to the small aspect ratio model, that is, to an evolution equation  
\begin{equation}\label{usar}
\alpha^2\partial_t^2 u+\partial_t u-\partial_x^2 u = -\frac{\lambda}{(1+u)^2}\,, \quad {(t,x)\in (0,\infty)\times (-1,1)\,,}
\end{equation}
subject to \eqref{bcu} solely involving the displacement $u$.
Note that the boundary conditions \eqref{a}, \eqref{b} suffice to determine $\psi_0$ which then satisfies on the lateral boundaries $x=\pm 1$
\begin{equation}\label{l}
\psi_0(\pm 1,z)=1+z\,, \quad {z\in (-1,0)\,,}
\end{equation}
due to \eqref{bcu}.
The small aspect ratio model \eqref{usar} with \eqref{bcu} has widely been investigated in the recent past (with possibly an additional fourth-order term accounting for the deformation due to bending as already discussed) and also variants thereof, e.g. in higher dimensions or with additional permittivity profiles or non-local terms. An obvious difficulty arising in the study of \eqref{usar} is the singularity of the source term $-\lambda/(1+u)^2$ as $u$ approaches $-1$ which corresponds to the aforementioned touchdown phenomenon for the MEMS device. Concerning the dynamic behavior of small aspect ratio models we refer the reader to \cite{FloresMercadoPelesko, GuoSIAMDynamic10, KavallarisLaceyNikolopoulos} for the hyperbolic case $\alpha>0$ and to \cite{FloresMercadoPeleskoSmyth_SIAP07, GhoussoubGuoNoDEA08, GuoJDE08, GuoJDE08_II, Hui11, PeleskoSIAP02} for the corresponding parabolic equation with $\alpha=0$ when damping or viscous forces dominate over inertial forces in the system. The dynamic behavior of a membrane evolving according to \eqref{usar}, \eqref{bcu} is determined by a pull-in voltage $\lambda_*>0$, see e.g. \cite{BernsteinGuidottiPelesko, GhoussoubGuoSIMA07, LinYang, PeleskoSIAP02}. More precisely, if $\lambda<\lambda_*$ there are a stable and an unstable steady state (i.e. time independent) solution of \eqref{usar} subject to \eqref{bcu}, that is, of
\begin{equation}\label{uu}
\partial_x^2 u=\frac{\lambda}{(1+u)^2}\ ,\quad x\in (-1,1)\ ,\qquad u(\pm 1)=0\ ,
\end{equation}
and solutions to the dynamical problem \eqref{usar} starting out from $u=0$ converge toward the stable steady state. Steady states cease to exist for voltage values $\lambda$ above $\lambda_*$  and solutions to the dynamic problem touch down on the ground plate in finite time, that is, a pull-in instability occurs. We refer the reader to \cite{EspositoGhoussoubGuo, FloresMercadoPelesko, BernsteinPelesko} for a review of these results and references as well as to \cite{BernsteinGuidottiPelesko, FloresMercadoPeleskoSmyth_SIAP07, GhoussoubGuoSIMA07, GhoussoubGuoNoDEA08, GuoJDE08, GuoJDE08_II, PeleskoSIAP02, PeleskoDriscoll} and the references therein for further details on small aspect ratio models. 

\medskip

To the best of our knowledge, the original model without small gap assumption has not been tackled so far from an analytical point of view. The aim of this paper is to make a step in this direction by studying the stationary free boundary problem. More precisely, we shall focus on finding functions \mbox{$u:[-1,1]\rightarrow (-1,{\infty)}$} and $\psi:\overline{\Omega(u)}\rightarrow \R$ satisfying the coupled system of elliptic equations
\begin{align}
\varepsilon^2\partial_x^2\psi(x,z) + \partial_z^2\psi(x,z) &=0\ ,\quad  & (x,z)\in \Omega(u)\ ,\label{1}\\
\psi(x,z)&=1+z\ , & (x,z)\in \partial\Omega(u)\ ,\label{2}\\
\partial_x^2 u(x) &=\lambda \big(\varepsilon^2\vert\partial_x\psi(x,u(x))\vert^2+\vert\partial_z\psi(x,u(x))\vert^2\big)\ , &x\in (-1,1)\ ,\label{3}\\
u(x)&=0\ , & x=\pm 1\ ,\label{4}
\end{align}
where the domain of definition $\overline{\Omega(u)}$ of $\psi$ is
$$
\Omega(u):=\big\{(x,z)\,;\, -1<x<1\, ,\, -1<z<u(x)\big\} \,,
$$
that is, the two-dimensional region between the rigid ground plate and the membrane with deflection $u$. Obviously, $\Omega(u)$ is a domain and possesses four corners provided the values of the continuous and convex (see \eqref{3}) function $u$ satisfying \eqref{4} stay away from $-1$.
Let then $$\Gamma(u):=\partial\Omega(u)\setminus\{(\pm 1,-1), (\pm 1,0)\}$$ denote the boundary of $\Omega(u)$ without corners. System \eqref{1}-\eqref{4} is exactly the time-independent version of \eqref{psi}-\eqref{bcu} subject to the lateral boundary condition~\eqref{l}  which is imposed to make the system well-posed. This particular choice of a continuous boundary condition is made mainly for simplicity and we point out again that this condition is satisfied by $\psi_0$ from \eqref{sar} in the small aspect ratio limit. For the stationary free boundary problem we shall show existence of smooth solutions for small voltage values $\lambda$:

\begin{thm}\label{A}
There exists $\lambda_0>0$  independent of $\e\in (0,1)$ such that \eqref{1}-\eqref{4} admits for each $\lambda\in (0,\lambda_0]$ a solution
$$
u\in C^{2+\alpha}\big([-1,1]\big)\ ,\quad \psi\in W_2^2\big(\Omega(u)\big) \cap C\big(\overline{\Omega(u)}\big)\cap C^{2+\alpha}\big(\Omega(u)\cup\Gamma(u)\big)\ ,
$$
where $\alpha\in [0,1)$ is arbitrary. The function $u$ is even, convex, and satisfies
\begin{align}
&0\ge u(x)\ge -1+\kappa_0\ ,\quad x\in (-1,1)\ ,\label{41}\\
& \| u\|_{W_\infty^2(-1,1)}\le \frac{1}{\kappa_0}\ ,\label{42}
\end{align}
for some $\kappa_0\in (0,1)$ independent of $\varepsilon$. Moreover, $\psi=\psi(x,z)$ is even with respect to $x\in (-1,1)$.
\end{thm}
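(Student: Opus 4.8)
The plan is to recast \eqref{1}--\eqref{4} as a fixed point problem for the membrane profile $u$ alone. For an admissible $u$ — even, with $u(\pm1)=0$, convex, and bounded away from $-1$ — one first solves the linear Dirichlet problem \eqref{1}--\eqref{2} for $\psi=\psi_u$, then reads off the right hand side
\[
g_u(x):=\varepsilon^2|\partial_x\psi_u(x,u(x))|^2+|\partial_z\psi_u(x,u(x))|^2\ge0
\]
of \eqref{3}, and finally sets $T(u):=v$, the solution of $v''=\lambda g_u$ on $(-1,1)$ with $v(\pm1)=0$, i.e.\ $v=-\lambda\int_{-1}^1 G(\cdot,y)g_u(y)\,\rd y$ with the nonnegative Green function $G$ of $-\partial_x^2$. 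A fixed point of $T$, together with $\psi_u$, solves the system. Fixing $\kappa_0\in(0,1)$, I would apply Schauder's fixed point theorem to $T$ on
\[
\mathcal{S}:=\big\{\,u\in W^2_\infty(-1,1)\ :\ u\ \text{even},\ u(\pm1)=0,\ u''\ge0,\ -1+\kappa_0\le u\le0,\ \|u\|_{W^2_\infty}\le 1/\kappa_0\,\big\},
\]
which is bounded, closed and convex in $C^1([-1,1])$, hence compact there by Arzel\`a--Ascoli; the bounds \eqref{41}--\eqref{42} are thereby built into $\mathcal{S}$.

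The analytic core is the problem for $\psi_u$, which I would treat after flattening: $(x,z)\mapsto(x,\eta)$ with $\eta=(1+z)/(1+u(x))$ maps $\Omega(u)$ onto the fixed rectangle $R:=(-1,1)\times(0,1)$ and turns \eqref{1}--\eqref{2} into $\mathcal{L}_u\phi=0$ in $R$ with explicit continuous data $h_u$ on $\partial R$, where $\mathcal{L}_u$ is (for fixed $\varepsilon$) uniformly elliptic with no zeroth order term, its principal coefficients depending in a Lipschitz way on $u$ and $u'$ and its single lower order coefficient involving $u''$ and being merely bounded. For $u\in W^2_\infty$ standard elliptic theory (continuous principal part, right-angle corners of $R$) yields a unique $\phi_u\in W^2_p(R)$ for every $p<\infty$, hence $\psi_u\in W^2_p(\Omega(u))$ for every $p<\infty$ and, by Sobolev embedding, $\nabla\psi_u\in C^\alpha(\overline{\Omega(u)})$ and $g_u\in C^\alpha([-1,1])$ for every $\alpha<1$. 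The crucial point is a bound on $g_u$ \emph{uniform in $\varepsilon\in(0,1)$}: the rescaling $x=\varepsilon\tilde x$ turns \eqref{1} into Laplace's equation on $\tilde\Omega:=\{(\tilde x,z):|\tilde x|<1/\varepsilon,\ -1<z<u(\varepsilon\tilde x)\}$, whose boundary is uniformly $C^{1,1}$ on each arc (slope and curvature of the graph being $O(\varepsilon\|u'\|_\infty)$ and $O(\varepsilon^2\|u''\|_\infty)$) with corners of opening $\le\pi/2$, while $g_u$ is exactly the squared boundary gradient of the resulting harmonic function; elliptic gradient estimates with boundary data that is smooth on each arc then give $\|g_u\|_{L_\infty(-1,1)}\le C_0$, with $C_0$ depending only on $\|u\|_{W^2_\infty}$ and $(1+\min u)^{-1}$ and \emph{not on $\varepsilon$}.

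For $u\in\mathcal{S}$ this gives $\|g_u\|_\infty\le C_0(\kappa_0)$ uniformly in $\varepsilon$, so $v=T(u)$ is convex and $\le0$ (as $v''=\lambda g_u\ge0$), even (by uniqueness $\psi_u$ is even in $x$ when $u$ is, hence so is $g_u$), and — using $\int_{-1}^1 G(x,y)\,\rd y=(1-x^2)/2\le\tfrac12$ and $\|v'\|_\infty\le2\|v''\|_\infty$ — satisfies $\|v\|_\infty\le\tfrac\lambda2 C_0(\kappa_0)$ and $\|v\|_{W^2_\infty}\le4\lambda\,C_0(\kappa_0)$; hence there is $\lambda_0=\lambda_0(\kappa_0)>0$ \emph{independent of $\varepsilon\in(0,1)$} with $T(\mathcal{S})\subset\mathcal{S}$ for $0<\lambda\le\lambda_0$. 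Continuity of $T$ on $\mathcal{S}$ for the $C^1$-topology rests on continuous dependence of the flattened problem on $u$: if $u_n\to u$ in $C^1$ with $u_n,u\in\mathcal{S}$, the $\phi_{u_n}$ are bounded in $W^2_p(R)$ (coefficients uniformly bounded with uniform moduli of continuity over $\mathcal{S}$), so along a subsequence $\phi_{u_n}\to\phi_u$ in $W^1_p(R)$ — one passes to the limit in the equation (principal coefficients converging uniformly, the bounded lower order one weak-$\ast$ in $L_\infty$) and in $h_{u_n}$, and uniqueness upgrades this to convergence of the whole sequence — whence the traces of $\nabla\phi_{u_n}$ on $\{\eta=1\}$ converge in $W^{1-1/p}_p(-1,1)\hookrightarrow C([-1,1])$ (with $p>2$), so $g_{u_n}\to g_u$ in $C([-1,1])$ and $T(u_n)\to T(u)$ in $C^2([-1,1])$. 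Schauder's theorem then produces $u\in\mathcal{S}$ with $u=T(u)$, solving \eqref{1}--\eqref{4}, with $u$ even, convex and satisfying \eqref{41}--\eqref{42}. Regularity follows by a bootstrap: $\psi_u\in W^2_2(\Omega(u))\cap C(\overline{\Omega(u)})$ since $h_u$ is continuous and the corners convex; $u''=\lambda g_u\in C^\alpha$ gives $u\in C^{2+\alpha}([-1,1])$; and then the coefficients of $\mathcal{L}_u$ are H\"older and the data smooth on each arc, so interior and boundary Schauder estimates away from the corners give $\psi_u\in C^{2+\alpha}(\Omega(u)\cup\Gamma(u))$ — all for arbitrary $\alpha<1$.

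I expect the main obstacle to be precisely the two demands on the elliptic analysis: it must be \emph{quantitatively uniform in $\varepsilon$}, so that $\lambda_0$ does not degenerate as $\varepsilon^2\partial_x^2+\partial_z^2$ loses ellipticity in the $x$-direction, and \emph{stable under perturbation of the domain $\Omega(u)$}; the latter is awkward because only $u''\in L_\infty$ enters the coefficients, so that continuity of $u\mapsto\psi_u$ must be extracted from compactness of the flattened solutions plus weak convergence of the lower order coefficient rather than from Lipschitz bounds. The corner analysis behind the global $W^2_2$-regularity and the Schauder step is a further, more routine, technical ingredient.
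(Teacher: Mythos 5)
Your overall architecture is the same as the paper's: flatten $\Omega(u)$ onto the rectangle, solve the elliptic problem for $\phi_u$, read off $g_u$, solve the ODE, and run Schauder on a convex set that encodes \eqref{41}--\eqref{42} (your compactness via Arzel\`a--Ascoli in $C^1$ versus the paper's $W_2^{2+\sigma}(-1,1)\hookrightarrow\hookrightarrow W_2^2(-1,1)$, and your continuity-by-compactness-and-uniqueness versus the paper's continuity of $u\mapsto A(u)^{-1}$ in Lemma~\ref{L33}, are only cosmetic differences). The genuine divergence, and the genuine gap, is exactly at the step you yourself flag: the claim $\|g_u\|_{L_\infty(-1,1)}\le C_0$ with $C_0$ independent of $\e$. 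This claim \emph{is} the theorem's content (it is the only reason $\lambda_0$ does not degenerate as $\e\to0$), and the argument you offer for it --- rescale $x=\e\tilde x$ and invoke ``elliptic gradient estimates'' --- is not a citable off-the-shelf estimate in the uniform form you need. After rescaling you must bound the boundary gradient of a harmonic function on a domain of length $2/\e$, with a constant that is uniform (i) in the length, (ii) over the whole class of membranes with merely $W^2_\infty$ graphs, and (iii) up to the four corners, where the domain is only a curvilinear polygon and the data $1+z$ matches the top value $1$ only to first order at the vertices; making (iii) quantitative and $\e$-uniform requires a barrier construction or a careful frozen-coefficient corner analysis, i.e. precisely the work the sketch omits. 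The paper closes this point with a two-line device (Lemma~\ref{L34}): the explicit supersolution $w_\alpha(\eta)=\eta^{1+\alpha}$ with $\alpha=2\e^2$ satisfies $\mathcal{L}_u w_\alpha\ge0$ because the set $\mathcal{C}$ enforces $u\le 0$ and $0\le\partial_x^2u\le r_0<2$, and the comparison principle then gives the pointwise bound $0\le\partial_\eta\phi_u(\cdot,1)\le 1+2\e^2$ on the flattened rectangle; plugged into \eqref{33} this yields $0\le\partial_x^2 S(u)\le 4\lambda(1+4\e^2r_0^2)(1+2\e^2)/(2-r_0)^2$ and hence an $\e$-independent $\lambda_0$ (Proposition~\ref{P1}). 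Note that the paper's barrier exploits the structural constraint on $\partial_x^2 u$ built into $\mathcal{C}$; if you keep your set $\mathcal{S}$, the same barrier works with $\alpha=\e^2/\kappa_0$, which would let you bypass the rescaling argument entirely.

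A secondary soft spot: your assertion that the flattened problem has a unique solution in $W^2_p(R)$ for every $p<\infty$ ``because of the right-angle corners of $R$'' is not automatic, since $\mathcal{L}_u$ carries the mixed-derivative coefficient $-2\e^2\eta\,\partial_xu/(1+u)$, which does not vanish at the upper corners (where $\partial_x u(\pm1)\neq0$ in general), so the relevant corner condition is the one for the frozen-coefficient operator, not for the Laplacian on a square. The paper sidesteps this: for the fixed-point argument it only needs $W_2^2(\Omega)$ (Ladyzhenskaya--Uraltseva, Lemma~\ref{L32}) together with the $W_2^{1/2}$ trace bound \eqref{311} and pointwise-multiplication theorems (Lemma~\ref{L34a}), and it performs the higher ($W^2_p$, then $C^{2+\alpha}$) regularity afterwards in the physical domain $\Omega(u)$, where the operator $\e^2\partial_x^2+\partial_z^2$ has constant coefficients and Grisvard's corner theory applies directly (Corollary~\ref{c1}); your continuity argument, which needs uniform $W^2_p$ bounds with $p>2$ over the whole class, would have to supply that corner analysis or be rerun at the $W^2_2$ level with fractional traces as in the paper.
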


We refer to Section~\ref{Sec2} for the proof of Theorem~\ref{A} which is based on a transformation to a fixed domain and on an application of Schauder's fixed point theorem. Concerning the latter, given a displacement $u\in W_\infty^2(-1,1)$ with values in $(-1,0)$, we first construct in Lemma~\ref{L32} the corresponding solution $\psi_u$ to \eqref{1}-\eqref{2} by using an equivalent formulation on a rectangle. Of particular importance is the regularity of the trace of the gradient of $\psi_u$ on the upper boundary $\{z=u\}$ as stated in Lemma~\ref{L34} which plays an important role in the subsequent analysis of \eqref{3}-\eqref{4}. Indeed, it is used as a source term to construct a solution $S(u)$ to \eqref{3}-\eqref{4} with $\psi_u$ instead of $\psi$, see Lemma~\ref{L34a}. Restricting suitably the set of admissible displacements $u$, the map $S$ turns out to enjoy the properties needed to apply Schauder's fixed point theorem.

\medskip

In particular, for values $\lambda\le\lambda_0$, Theorem~\ref{A} provides for each $\varepsilon\in (0,1)$ a solution $(u_\varepsilon,\psi_\varepsilon)$ to \eqref{1}-\eqref{4} satisfying the bounds \eqref{41}, \eqref{42} uniformly with respect to $\e\in (0,1)$. This property allows us to give a rigorous justification of the small aspect ratio model \eqref{usar}, \eqref{bcu} by showing that $(u_\varepsilon,\psi_\varepsilon)_{\varepsilon\in (0,1)}$ converges toward a solution to that model as $\varepsilon$ tends to zero. More generally, we have:

\begin{thm}\label{B}
Let $\lambda>0$ and let $(u_\varepsilon,\psi_\varepsilon)_{\varepsilon\in (0,1)}$ be a family of solutions to \eqref{1}-\eqref{4} satisfying the bounds \eqref{41} and \eqref{42}. Then there are a sequence $\varepsilon_k\searrow 0$ and a (smooth) solution $u_0$ to the time-independent small aspect ratio equation \eqref{uu}
such that
$$ 
u_{\varepsilon_k}\longrightarrow u_0\quad \text{in}\quad W_\infty^1(-1,1)
$$
and
\begin{equation}\label{411} 
\psi_{\varepsilon_k}\mathbf{1}_{\Omega(u_{\varepsilon_k})}\longrightarrow \psi_{0}\mathbf{1}_{\Omega(u_{0})}\quad \text{in}\quad L_2\big((-1,1)\times (0,1)\big)
\end{equation}
as $k\rightarrow \infty$, where $\psi_0$ is the corresponding potential \eqref{sar} with $u=u_0$.
\end{thm}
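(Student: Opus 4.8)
The plan is to combine the $\e$--independent bounds \eqref{41}--\eqref{42} with the fixed--domain reformulation used in Section~\ref{Sec2} and a compactness argument, the delicate point being the passage to the limit in the quadratic electrostatic term on the right--hand side of \eqref{3}.

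First I would extract the limiting membrane. By \eqref{42} the family $(u_\e)_{\e\in(0,1)}$ is bounded in $W_\infty^2(-1,1)$, which is compactly embedded in $C^1([-1,1])$. Hence there are a sequence $\e_k\searrow 0$ and $u_0\in W_\infty^2(-1,1)$ such that $u_{\e_k}\to u_0$ in $W_\infty^1(-1,1)$, and $u_0$ inherits the evenness, the convexity, and the bound \eqref{41}, so that $1+u_0\ge\kappa_0>0$ on $[-1,1]$.

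Next I would transport the potentials to the reference rectangle $R:=(-1,1)\times(0,1)$ by setting $\Phi_\e(x,\eta):=\psi_\e\big(x,(1+u_\e(x))\eta-1\big)$, as in Section~\ref{Sec2}. Then $\Phi_\e$ solves on $R$ the associated transformed elliptic equation, whose coefficients depend on $u_\e$, $u_\e'$, $u_\e''$ and $\e$ and are, by \eqref{41}--\eqref{42}, bounded on $R$ uniformly in $\e\in(0,1)$, the coefficient of $\partial_\eta^2$ being moreover bounded below by~$1$. Since the trace of $\Phi_\e$ on $\partial R$ is $\e$--independent and coincides with that of $\Phi_0(x,\eta):=\eta$ --- the transform of the small--aspect--ratio potential \eqref{sar} taken at $u=u_0$ --- the difference $\chi_\e:=\Phi_\e-\Phi_0$ vanishes on $\partial R$ and solves the same equation with a right--hand side bounded by $C\e^2$ in $L_\infty(R)$; a standard energy estimate (coercivity of the $\partial_\eta$--part together with Poincar\'e's inequality) then gives $\|\chi_\e\|_{H^1(R)}\le C\e$, that is, $\Phi_\e\to\Phi_0$ in $H^1(R)$ as $\e\to0$. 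Undoing the change of variables, using the bound $0\le\psi_\e\le1$ from the maximum principle, and noting that the uniform convergence $u_{\e_k}\to u_0$ forces $\mathbf{1}_{\Omega(u_{\e_k})}\to\mathbf{1}_{\Omega(u_0)}$ in $L_2$, one then obtains \eqref{411}.

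The remaining --- and hardest --- step is to identify $u_0$ as a solution of \eqref{uu}. Rewriting the right--hand side of \eqref{3} in the reference variables by means of \eqref{2} (which makes $\Phi_\e(\cdot,1)$ constant, so that only its normal derivative survives), one finds it equals
\[
\lambda\,\frac{1+\e^2\,(u_\e'(x))^2}{(1+u_\e(x))^2}\,\big|\partial_\eta\Phi_\e(x,1)\big|^2 \,,
\]
so everything reduces to the strong convergence in $L_2(-1,1)$ of the normal trace $\partial_\eta\Phi_\e(\cdot,1)$ --- equivalently, of the trace of $\nabla\psi_\e$ on the free boundary $\{z=u_\e\}$ --- the mere $H^1(R)$--convergence of $\Phi_\e$ giving only weak convergence of this trace, which does not control the square. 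This is where the trace regularity of Lemma~\ref{L34} is used: its bound is uniform in $\e\in(0,1)$ as a consequence of \eqref{41}--\eqref{42}, and it keeps $\partial_\eta\Phi_\e(\cdot,1)$ in a space compactly embedded in $L_2(-1,1)$; together with $\Phi_\e\to\Phi_0$ in $H^1(R)$ this forces $\partial_\eta\Phi_{\e_k}(\cdot,1)\to\partial_\eta\Phi_0(\cdot,1)=1$ in $L_2(-1,1)$ along a further subsequence. Hence the displayed forcing converges to $\lambda/(1+u_0)^2$ in $L_1(-1,1)$, while $u_{\e_k}''\to u_0''$ in $\mathcal{D}'(-1,1)$ by the $C^1$--convergence of $u_{\e_k}$; passing to the limit in \eqref{3}--\eqref{4} yields $\partial_x^2 u_0=\lambda/(1+u_0)^2$ on $(-1,1)$ with $u_0(\pm1)=0$, i.e.\ \eqref{uu}, and a routine bootstrap (the right--hand side being smooth in $u_0$ since $1+u_0\ge\kappa_0$) shows $u_0\in C^\infty([-1,1])$. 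The main obstacle is precisely this no--concentration property of the electrostatic forcing, which is why the uniform estimates of Section~\ref{Sec2}, hence the bounds \eqref{41}--\eqref{42}, are indispensable.
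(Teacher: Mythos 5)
Your overall strategy (reformulate on the fixed rectangle, use the uniform bounds to get compactness of $(u_\e)$, reduce everything to the strong convergence of the normal trace $\partial_\eta\phi_\e(\cdot,1)$) is the right one and matches the paper's. But there are two problems, one of which is fatal.

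First, a minor overstatement: the claim that the difference $\chi_\e:=\phi_\e-\eta$ satisfies $\|\chi_\e\|_{H^1(R)}\le C\e$ does not follow from ``a standard energy estimate.'' Multiplying the transformed equation by $\chi_\e$ and integrating yields coercivity only in the $\eta$--direction, since the coefficient of $\partial_x^2$ is $\e^2$; one obtains $\|\partial_\eta\chi_\e\|_{L_2(\Omega)}\le C\e$ and, via the one--dimensional Poincar\'e inequality in $\eta$, a bound on $\|\chi_\e\|_{L_2(\Omega)}$, but \emph{no} $O(\e)$ control on $\partial_x\chi_\e$; all the energy identity gives for the $x$--derivative is $\e\,\|\partial_x\chi_\e-\eta\frac{\partial_x u_\e}{1+u_\e}\partial_\eta\chi_\e\|_{L_2}\le C\e$, i.e.\ an $O(1)$ bound. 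This does not affect \eqref{411}, for which the $L_2$ bound suffices, but it should not be stated as an $H^1$ convergence.

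The fatal gap is in the trace--compactness step. You assert that the $W_2^{1/2}(-1,1)$ bound of Lemma~\ref{L34} on $\partial_\eta\phi_u(\cdot,1)$ is uniform in $\e\in(0,1)$ ``as a consequence of \eqref{41}--\eqref{42}.'' It is not. The constant $c_2$ in \eqref{311} is inherited from the $W_2^2(\Omega)$ a~priori bound \eqref{36a}, whose constant $c_1=c_1(r_0,\e)$ comes from the elliptic estimate of Ladyzhenskaya--Uraltseva and therefore scales with the inverse of the ellipticity constant of $-\mathcal{L}_u$; as computed in the proof of Lemma~\ref{L32}, the smallest eigenvalue of the principal part is of order $\e^2$, so $c_1$ (hence $c_2$) blows up as $\e\to0$. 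The bounds \eqref{41}--\eqref{42} remove the $u$--dependence of the constants, not their $\e$--dependence. Consequently you do not have a uniformly bounded family in $W_2^{1/2}(-1,1)$, and the compact--embedding argument you invoke to pass to the limit in $|\partial_\eta\phi_{\e}(\cdot,1)|^2$ collapses. The paper circumvents precisely this difficulty by not using Lemma~\ref{L34} at all in Section~\ref{Sec3}: it re--derives $\e$--uniform energy estimates directly from the transformed equation, namely $\|\partial_\eta\Phi_\e\|_{L_2(\Omega)}\le K_1\e$ (multiply by $\Phi_\e$) and $\|\partial_\eta^2\Phi_\e\|_{L_2(\Omega)}\le K_1\e^2$ (multiply by $\partial_\eta^2\Phi_\e$, using Lemma~\ref{lea.1}), and then applies the elementary one--dimensional trace inequality of Lemma~\ref{le4.2} to obtain the quantitative rate $\|\partial_\eta\Phi_\e(\cdot,1)\|_{L_2(-1,1)}\le K_2\e$. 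This yields the strong $L_2$ convergence $\partial_\eta\phi_{\e_k}(\cdot,1)\to1$ without any appeal to $W_2^{1/2}$ regularity, and is the key input your proposal is missing. Once that is in place, the passage to the limit in \eqref{33} and the bootstrap for the regularity of $u_0$ proceed exactly as you indicate.
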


The proof of Theorem~\ref{B} is performed in Section~\ref{Sec3} by using a compactness argument. Since \eqref{1} becomes degenerate elliptic in the limit $\varepsilon\to 0$, the regularity of $\psi_\varepsilon$ is no longer the same in the $x$- and $z$-directions and a cornerstone of the proof is to obtain estimates for the trace of $\partial_z \psi_\e$ on $\{ z = u_\e\}$.

\section{Existence for small voltage values: Proof of Theorem~\ref{A}}\label{Sec2} 

We first prove Theorem~\ref{A}. Since the domain of definition of the potential $\psi$ in \eqref{1} depends on the displacement $u$ of the membrane, we use an alternative formulation by transforming the problem on a fixed domain, that is, on the rectangle $\Omega:=(-1,1)\times (0,1)$. {More precisely}, given a function $u\in W_\infty^2(-1,1)$ taking values in $(-1,{\infty)}$ and satisfying the boundary conditions $u(\pm 1)=0$, we define a diffeomorphism \mbox{$T_u:=\overline{\Omega(u)}\rightarrow \bar{\Omega}$} by setting
\begin{equation}\label{Tu}
T_u(x,z):=\left(x,\frac{1+z}{1+u(x)}\right)\ ,\quad (x,z)\in \overline{\Omega(u)}\ .
\end{equation}
Clearly,
\begin{equation}\label{Tuu}
T_u^{-1}(x,\eta)=\big(x,(1+u(x))\eta-1\big)\ ,\quad (x,\eta)\in \bar{\Omega}\ ,
\end{equation}
and it readily follows that problem \eqref{1}-\eqref{2} is equivalent to
\begin{eqnarray}
\big(\mathcal{L}_u\phi\big) (x,\eta)\!\!\!&=0\ ,&(x,\eta)\in\Omega\ ,\label{23}\\
\phi(x,\eta)\!\!\!&=\eta\ , &(x,\eta)\in \partial\Omega\ ,\label{24}
\end{eqnarray}
for $\phi=\psi\circ T_u^{-1}$, {the $u$-dependent differential operator $\mathcal{L}_u$ being defined by}
\begin{equation*}
\begin{split}
\mathcal{L}_u w\, :=\, & \e^2\partial_x^2 w-2\e^2\eta\frac{\partial_x u(x)}{1+u(x)}\partial_x\partial_\eta w
+\frac{1+\e^2\eta^2(\partial_x u(x))^2}{(1+u(x))^2}\partial_\eta^2 w\\
& +\e^2\eta\left[2\left(\frac{\partial_x u(x)}{1+u(x)}\right)^2-\frac{\partial_x^2 u(x)}{1+u(x)}\right] \partial_\eta w\ .
\end{split}
\end{equation*}
Moreover,  \eqref{3}, \eqref{4} become
\begin{align}
\partial_x^2 u(x) &= \lambda\left[\frac{1+\e^2(\partial_x u(x))^2}{(1+u(x))^2} \right]\vert\partial_\eta\phi(x,1)\vert^2\ , & x\in (-1,1)\ ,\label{33}\\
u(x) &=0\ , & x=\pm 1\ ,\label{34}
\end{align}
where we have used
\begin{equation}\label{26}
\partial_x\phi(x,1)=0\ ,\quad x\in (-1,1)\ ,
\end{equation}
since $\phi(x,1)=1$ for $x\in (-1,1)$ by {\eqref{24}}.

Our goal is to solve \eqref{23}-\eqref{34} by means of Schauder's fixed point theorem. Fixing $r_0\in (0,2)$, we introduce the set
$$
\mathcal{C}:=\big\{u\in W_\infty^2(-1,1)\cap W_{2,D}^2(-1,1)\ :\ {u\ \text{ is even and }\ 0 \le \partial_x^2 u \le r_0 }\big\} \ \,,
$$
where
$$
W_{q,D}^2(-1,1):=\big\{u\in W_{q}^2(-1,1)\ :\ u(\pm 1)=0\big\} \ , \qquad {q\in [1,\infty]}\,.
$$
{Let us first collect some properties of $\mathcal{C}$.}

\begin{lem}\label{L31}
$\mathcal{C}$ is a closed, convex, and bounded subset of $W_q^2(-1,1)$ for each $q\in [1,\infty]$ and
\begin{equation}\label{36}
0\ge u(x)\ge-\frac{r_0}{2}>-1\ ,\quad x\in (-1,1)\ ,\quad u\in \mathcal{C}\ .
\end{equation}
\end{lem}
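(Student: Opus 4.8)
The plan is to verify the four assertions — convexity, boundedness in $W_q^2(-1,1)$, closedness, and the pointwise bound \eqref{36} — in turn; all are elementary one–dimensional arguments, and the only place that calls for a little care is the closedness statement.

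\emph{Convexity} is immediate: if $u_0,u_1\in\mathcal{C}$ and $t\in[0,1]$, then $u_t:=(1-t)u_0+tu_1$ is again even, still satisfies $u_t(\pm1)=0$, and $\partial_x^2 u_t=(1-t)\partial_x^2 u_0+t\partial_x^2 u_1$ takes values in $[0,r_0]$ because $\partial_x^2 u_0$ and $\partial_x^2 u_1$ do; hence $u_t\in\mathcal{C}$.

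For \eqref{36} I would use that every $u\in\mathcal{C}$ belongs to $W_\infty^2(-1,1)\hookrightarrow C^1([-1,1])$ and is even, so $\partial_x u(0)=0$. Integrating the bound $0\le\partial_x^2 u\le r_0$ over $(0,x)$ gives $0\le\partial_x u(x)\le r_0 x$ for $x\in[0,1]$; integrating once more over $(x,1)$ and using $u(1)=0$ yields
$$
u(x)=-\int_x^1\partial_x u(s)\,\mathrm{d}s\in\Big[-\frac{r_0}{2}(1-x^2),\,0\Big]\subset\Big[-\frac{r_0}{2},\,0\Big]\,,\qquad x\in[0,1]\,,
$$
and the estimate extends to $x\in[-1,0]$ by evenness. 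Since $r_0<2$ this is precisely \eqref{36}. The same computation shows $\|\partial_x u\|_{L_\infty}\le r_0$, and combined with $\|\partial_x^2 u\|_{L_\infty}\le r_0$ and $\|u\|_{L_\infty}\le r_0/2$ it bounds $\mathcal{C}$ in $W_\infty^2(-1,1)$, hence in every $W_q^2(-1,1)$ because $(-1,1)$ has finite length.

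Finally, for \emph{closedness} in $W_q^2(-1,1)$, take $(u_n)\subset\mathcal{C}$ with $u_n\to u$ in $W_q^2(-1,1)$. Since $W_q^2(-1,1)\hookrightarrow C^1([-1,1])$ for every $q\in[1,\infty]$ in one space dimension, the convergence holds in $C^1([-1,1])$, so $u(\pm1)=0$ and $u$ is even as a uniform limit of even functions. Moreover $\partial_x^2 u_n\to\partial_x^2 u$ in $L_q(-1,1)$, hence a.e.\ along a subsequence, which forces $0\le\partial_x^2 u\le r_0$ a.e.; in particular $\partial_x^2 u\in L_\infty(-1,1)$, so $u\in W_\infty^2(-1,1)\cap W_{2,D}^2(-1,1)$ and $u\in\mathcal{C}$. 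The two mild technical points here — invoking the $C^1$–embedding to read off the boundary values and the evenness pointwise, and passing to an a.e.-convergent subsequence to preserve the two-sided bound on $\partial_x^2 u$ — are the only ones I expect to require any elaboration.
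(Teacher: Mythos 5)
Your proof is correct and reaches all four conclusions by elementary means, but it takes a mildly different route from the paper at two points. For the pointwise bound \eqref{36}, the paper first shows $u\le 0$ from convexity and the boundary values, then locates the minimum point $x_m$, uses $\partial_x u(x_m)=0$, and writes a Taylor-type identity to obtain $-u(x_m)=\int_{x_m}^1(1-y)\partial_x^2 u(y)\,\rd y\le r_0/2$; you instead observe that evenness forces $\partial_x u(0)=0$ and integrate $0\le\partial_x^2 u\le r_0$ twice from there, which gives the same bound and as a by-product the sharper gradient estimate $\|\partial_x u\|_{L_\infty}\le r_0$ (the paper only records $|\partial_x u|\le 2r_0$, via averaging over the base point $y$). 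For closedness, the paper argues somewhat abstractly: $\mathcal{C}$ is closed and convex in $W_\infty^2$, hence weakly closed there, and transfers this to $W_q^2$; you argue directly by extracting an a.e.-convergent subsequence of $\partial_x^2 u_n$ to propagate the two-sided bound to the limit, which is more concrete, resolves explicitly the issue of why the $W_q^2$-limit lands back in $W_\infty^2$ (namely because $0\le\partial_x^2 u\le r_0$ a.e.), and avoids any weak-topology bookkeeping. Both routes are sound and of comparable length; your version of the closedness argument is, if anything, the more self-contained of the two.
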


\begin{proof}
{Clearly}, $\mathcal{C}$ is convex and closed in $ W_\infty^2(-1,1)$ and thus weakly closed  in $ W_\infty^2(-1,1)$. {Therefore,} $\mathcal{C}$ is convex and closed in $W_q^2(-1,1)$ for each $q\in [1,\infty]$. Next, for $ u\in \mathcal{C}$, {integrating the equality}
$$
\partial_x u(x)=\partial_x u(y)+\int_y^x\partial_x^2 u(z)\,\rd z\ ,\quad {(x,y)\in (-1,1)\times (-1,1)}\ ,
$$
with respect to $y$ on $(-1,1)$, {we find:}
\begin{equation}\label{36c}
\vert\partial_x u(x)\vert\le 2 r_0\ ,\quad x\in (-1,1)\ .
\end{equation}
{Since $u(\pm 1)=0$, we deduce from \eqref{36c} that $|u(x)|\le 2 r_0$ for $x\in [-1,1]$ and $\mathcal{C}$ is thus} bounded in $W_\infty^2(-1,1)$. {Next, the convexity of $u$ and the boundary values $u(\pm 1)=0$ clearly ensure that $u\le 0$. Finally, if} $u$ attains a negative minimum at some point $x_m\in (-1,1)$, we may assume $x_m\in [0,1)$ without loss of generality since $u$ is even. Then $\partial_x u(x_m)=0$ and
$$
u(x)-u(x_m)=\int_{x_m}^x\partial_x u(y)\,\rd y=\big[(y-x)\partial_x u(y)\big]_{y=x_m}^{y=x}-\int_{x_m}^x(y-x)\partial_x^2 u(y)\,\rd y=\int_{x_m}^x(x-y)\partial_x^2 u(y)\,\rd y\ .
$$
Thus, since $u(1)=0$,
$$
-u(x_m)=\int_{x_m}^1 (1-y)\partial_x^2 u(y)\,\rd y\le \frac{r_0}{2}\ ,
$$
{from which} \eqref{36} follows. 
\end{proof}

Next we study the existence and properties of the solution to \eqref{23}-\eqref{24} when $u\in\mathcal{C}$ is given.

\begin{lem}\label{L32}
Given $u\in\mathcal{C}$ there is a unique solution $\phi_u\in W_{2}^2(\Omega)$ to \eqref{23}-\eqref{24}. Moreover, $\phi_u=\phi_u(x,\eta)$ is even with respect to $x$,
\begin{equation}\label{266}
\eta\big(1+u(x)\big)\le\phi_u(x,\eta)\le 1\ ,\quad (x,\eta)\in\bar{\Omega}\ ,\quad u\in\mathcal{C}\ ,
\end{equation}
and 
\begin{equation}\label{36a}
\|\phi_u\|_{W_2^2(\Omega)}\le c_1\ ,\quad u\in\mathcal{C}\ ,
\end{equation}
for some constant $c_1=c_1(r_0,\e)>0$.
\end{lem}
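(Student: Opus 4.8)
The plan is to treat \eqref{23}--\eqref{24} as a linear elliptic boundary value problem on the fixed rectangle $\Omega=(-1,1)\times(0,1)$ with the operator $\mathcal{L}_u$ and to extract from the structure of $\mathcal{C}$ the coefficient bounds that yield uniform ellipticity and $W_2^2$-estimates. First I would homogenize the boundary condition: writing $\phi=\eta+v$ reduces \eqref{23}--\eqref{24} to $\mathcal{L}_u v = -\mathcal{L}_u[\eta] =: f_u$ in $\Omega$ with $v=0$ on $\partial\Omega$, where $f_u(x,\eta)$ consists only of the lower-order terms of $\mathcal{L}_u$ applied to $\eta$, hence $f_u = -\e^2\eta\big[2(\partial_x u/(1+u))^2 - \partial_x^2 u/(1+u)\big]$. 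Using Lemma~\ref{L31} — in particular $0\le\partial_x^2u\le r_0$, $|\partial_x u|\le 2r_0$ (see \eqref{36c}), and $0\ge u\ge -r_0/2$ (see \eqref{36}) — the coefficients of $\mathcal{L}_u$ are bounded in $L_\infty(\Omega)$ by constants depending only on $r_0$ and $\e$, and the principal part is uniformly elliptic since the coefficient of $\partial_\eta^2$ is bounded below by $(1+u)^{-2}\ge 1$ while the coefficient of $\partial_x^2$ equals $\e^2$; moreover $f_u\in L_\infty(\Omega)\hookrightarrow L_2(\Omega)$ with a bound depending only on $r_0,\e$.

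For existence and the $W_2^2$-estimate \eqref{36a} I would note that $\mathcal{L}_u$ is an elliptic operator with merely $L_\infty$ leading coefficients; this is not quite enough for the Agmon--Douglis--Nirenberg $W_2^2$ theory in a general domain, but here the top-order coefficient of $\partial_x^2$ is the constant $\e^2$, the coefficient of the mixed derivative $\partial_x\partial_\eta$ involves $\partial_x u/(1+u)$, which is Lipschitz (indeed $W_\infty^1$) because $u\in\mathcal{C}\subset W_\infty^2$, and the coefficient of $\partial_\eta^2$ is likewise $W_\infty^1$ in $x$ and smooth in $\eta$. Thus $\mathcal{L}_u$ has coefficients in $W_\infty^1(\Omega)$, and one can either invoke elliptic regularity up to the boundary of the rectangle (the corners of $\Omega$ being right angles, for which $W_2^2$-regularity of the Dirichlet problem is classical — see e.g. Grisvard) or, more elementarily, flatten nothing further and use the energy method: testing $\mathcal{L}_u v=f_u$ against $v$ and against $\partial_\eta^2 v$, $\partial_x^2 v$ and controlling the mixed term by Cauchy--Schwarz plus the Lipschitz bound on the coefficients, exactly as in the standard interior-plus-boundary $H^2$ estimate for the Dirichlet problem on a convex polygon. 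Either route gives a unique $v\in W_2^2(\Omega)\cap W^1_{2,D}(\Omega)$ with $\|v\|_{W_2^2(\Omega)}\le c(r_0,\e)\|f_u\|_{L_2(\Omega)}\le c_1(r_0,\e)$, whence $\phi_u=\eta+v\in W_2^2(\Omega)$ satisfies \eqref{36a}. Uniqueness follows from the maximum principle (or the energy identity), since the zeroth-order coefficient of $\mathcal{L}_u$ vanishes and the principal part is elliptic.

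The bounds \eqref{266} come from the maximum principle applied to $\phi_u$: $\mathcal{L}_u$ has no zeroth-order term, so $\phi_u$ attains its extrema on $\partial\Omega$, where $\phi_u(x,\eta)=\eta\in[0,1]$; hence $0\le\phi_u\le 1$, and in particular $\phi_u\le 1$. For the sharper lower bound $\phi_u(x,\eta)\ge\eta(1+u(x))$, I would construct the explicit subsolution $\underline\phi(x,\eta):=\eta(1+u(x))$, which under $T_u^{-1}$ corresponds to the affine function $1+z$ on $\Omega(u)$ that vanishes on $\{z=-1\}$ and equals $1$ on $\{z=u(x)\}$; computing $\mathcal{L}_u\underline\phi$ one finds it equals $\e^2\eta\,\partial_x^2 u(x)/(1+u(x))\ge 0$ because $\partial_x^2u\ge 0$ and $1+u\ge r_0/2>0$ on $\mathcal{C}$, so $\underline\phi$ is a subsolution; since $\underline\phi=\eta=\phi_u$ on the top and bottom of $\partial\Omega$ and $\underline\phi(\pm 1,\eta)=\eta=\phi_u(\pm 1,\eta)$ on the lateral sides (using $u(\pm1)=0$), the comparison principle gives $\underline\phi\le\phi_u$ throughout $\bar\Omega$. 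Finally, evenness of $\phi_u$ in $x$ follows from uniqueness: since $u$ is even, $\mathcal{L}_u$ is invariant under $x\mapsto -x$ and so is the boundary datum $\eta$, hence $(x,\eta)\mapsto\phi_u(-x,\eta)$ solves the same problem and must coincide with $\phi_u$.

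The main obstacle I anticipate is the $W_2^2$-regularity up to the corners of the rectangle: the operator has non-constant (though Lipschitz) coefficients, and one must be a little careful that the presence of the four right-angle corners of $\Omega$ does not destroy $H^2$-regularity of the Dirichlet problem. This is in fact fine — right angles are below the critical opening angle $\pi$ for the Laplacian, and a change of variables or a perturbation argument transfers this to $\mathcal{L}_u$ — but it is the step that requires invoking a genuine regularity theorem rather than a soft argument, and the constant it produces must be tracked to see that it depends only on $r_0$ and $\e$ (uniformly over $u\in\mathcal{C}$), which is what \eqref{36a} asserts. Everything else — the coefficient bounds, the maximum principle arguments for \eqref{266}, and the symmetry — is routine given Lemma~\ref{L31}.
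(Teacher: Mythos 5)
Your proof follows essentially the same route as the paper: homogenize via $\phi=\eta+\Phi$, read off $f_u=\mathcal{L}_u\eta$, invoke the uniform ellipticity of (minus) $\mathcal{L}_u$ and the uniform $L_\infty$-bounds on the coefficients coming from Lemma~\ref{L31}, cite linear $W_2^2$-theory for the Dirichlet problem (the paper uses Ladyzhenskaya--Uraltseva; your mention of Grisvard/energy methods for the convex rectangle is an acceptable substitute and the right-angle-corner observation is exactly the relevant point), establish \eqref{266} by comparison, and get evenness from uniqueness and the invariance of the problem under $x\mapsto-x$. No missing idea.

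One computational slip is worth flagging: for $\underline\phi(x,\eta)=\eta(1+u(x))$ one has in fact $\mathcal{L}_u\underline\phi\equiv 0$, not $\e^2\eta\,\partial_x^2 u/(1+u)$. (Differentiating: $\partial_\eta^2\underline\phi=0$, the $\partial_x^2$-term contributes $\e^2\eta\,\partial_x^2 u$, the mixed term contributes $-2\e^2\eta(\partial_x u)^2/(1+u)$, and the first-order $\partial_\eta$-term contributes exactly $2\e^2\eta(\partial_x u)^2/(1+u)-\e^2\eta\,\partial_x^2 u$, so all terms cancel.) This is not an accident: $\underline\phi=\eta(1+u)$ is $T_u$-pullback of $(x,z)\mapsto 1+z$, which solves $\e^2\partial_x^2+\partial_z^2=0$ exactly. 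Your comparison argument survives since $0\ge 0$ still holds and the boundary inequalities are correct, but the claim that the barrier is a strict subsolution with a sign coming from $\partial_x^2 u\ge 0$ should be corrected to: it is an exact solution, as is $w\equiv 1$, and both comparison inequalities are obtained from solutions of $\mathcal{L}_u w=0$ together with boundary ordering.

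A very minor difference: the paper's cited estimate has the form $\|\Phi_u\|_{W_2^2}\le c(\|\Phi_u\|_{L_2}+1)$, so they need \eqref{266} first to bound the $L_2$-norm and thereby obtain the uniform constant $c_1$; you assert a direct bound $\|v\|_{W_2^2}\le c\|f_u\|_{L_2}$, which is also true (by uniqueness and a standard contradiction/compactness argument) but bypasses this ordering. Either way \eqref{36a} follows; just make sure whichever $W_2^2$-estimate you invoke does yield a constant depending only on $r_0,\e$.
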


\begin{proof}
We claim that the operator $-\mathcal{L}_u$ is elliptic for $u\in\mathcal{C}$ given. To see this, choose an arbitrary $u\in\mathcal{C}$ and let
$$
A:=\left(\begin{matrix} 
 \e^2 & & \displaystyle{-\frac{\e^2\partial_x u(x)}{1+u(x)}\eta} \\ 
 & & \\
\displaystyle{-\frac{\e^2\partial_x u(x)}{1+u(x)}\eta} & & \displaystyle{\frac{1+\e^2\eta^2(\partial_x u(x))^2}{(1+u(x))^2}}
\end{matrix}\right)
$$
denote the principal part of $-\mathcal{L}_u$ for fixed $(x,\eta)\in\bar{\Omega}$ with trace $t$ and determinant
$ d $ given by
$$
t:= \e^2 + \frac{1+\e^2\eta^2(\partial_x u(x))^2}{(1+u(x))^2}\ ,\qquad d:= \frac{\e^2}{(1+u(x))^2}\ .
$$
Then the two eigenvalues of $A$ are
$$
\mu_\pm =\frac{1}{2}\big( t\pm\sqrt{t^2-4d}\big)
$$
and since
$$
1+\e^2\le t \le \e^2+\frac{4}{(2-r_0)^2}\ \left( 1 + 4 \e^2 r_0^2 \right)\ ,\qquad d\ge \e^2\ ,
$$
by \eqref{36} and \eqref{36c}, 
$$
\mu_+ \ge \mu_- \ge \frac{d}{t} \ge \frac{\e^2 (2-r_0)^2}{\e^2 (2-r_0)^2 + 4 + 16 \e^2 r_0^2} > 0\,.
$$ 
Consequently, $-\mathcal{L}_u$ is elliptic with a positive ellipticity constant depending on $r_0$ and $\e$ but not on $u\in\mathcal{C}$. Next observe that \eqref{23}-\eqref{24} is equivalent to
\begin{align}
\big(\mathcal{L}_u\Phi\big) (x,\eta) &=-f_u(x,\eta)\ ,&(x,\eta)\in\Omega\ ,\label{23a}\\
\Phi(x,\eta) &=0\ , &(x,\eta)\in \partial\Omega\ ,\label{24a}
\end{align}
by setting $\Phi(x,\eta):=\phi(x,\eta)-\eta$, {$(x,\eta)\in \bar{\Omega}$}, where $f_u\in L_\infty(\Omega)$ is defined as
\begin{equation}\label{f}
f_u(x,\eta):=\mathcal{L}_u\eta =\e^2\eta\left[2\left(\frac{\partial_x u(x)}{1+u(x)}\right)^2-\frac{\partial_x^2 u(x)}{1+u(x)}\right]\ ,\quad (x,\eta)\in\Omega\ ,
\end{equation}
{and satisfies
\begin{equation}
\|f_u\|_{L_\infty(\Omega)} \le \e^2\ \left( \frac{32 r_0^2}{(2-r_0)^2} + \frac{2r_0}{2-r_0} \right) \label{spip}
\end{equation}
by \eqref{36c} and Lemma~\ref{L31}. Noticing that, thanks to Lemma~\ref{L31},} all coefficients of $\mathcal{L}_u$, written in divergence form 
\begin{equation*}
\begin{split}
\mathcal{L}_u w\, =\, & \partial_x\left(\e^2\partial_x w-\e^2\eta\frac{\partial_x u(x)}{1+u(x)}\partial_\eta w\right)
+\partial_\eta\left(-\e^2\eta\frac{\partial_x u(x)}{1+u(x)}\partial_x w +\frac{1+\e^2\eta^2(\partial_x u(x))^2}{(1+u(x))^2}\partial_\eta w\right)\\
& +\e^2\frac{\partial_x u(x)}{1+u(x)}\partial_x w-\e^2\eta\left(\frac{\partial_x u(x)}{1+u(x)}\right)^2\partial_\eta w\ ,
\end{split}
\end{equation*}
as well as $f_u$ have norms in $L_\infty(\Omega)$ uniformly bounded with respect to $u\in\mathcal{C}$, 
we may apply \cite[Chapt.~3, Thm.~9.1 $\&$ Thm.~10.1]{LadyzhenskayaUraltsevaLQEE} to obtain the existence and uniqueness of a solution $\Phi_u\in  W_{2,D}^2(\Omega)$ to \eqref{23a}-\eqref{24a} satisfying
\begin{equation}
\|\Phi_u\|_{W_2^2(\Omega)}\le c \big(\|\Phi_u\|_{L_2(\Omega)} +1\big) \label{gaston}
\end{equation}
with a constant $c$ depending on $r_0$ and $\e$ but not on $u\in\mathcal{C}$. Setting $\phi_u(x,\eta)=\Phi_u(x,\eta)+\eta$ for $(x,\eta)\in\bar{\Omega}$, the function $\phi_u$ obviously solves \eqref{23}-\eqref{24} and, owing to \eqref{gaston}, the bound \eqref{36a} readily follows provided we can verify \eqref{266}. For this we take $w\equiv 1$ and note that $\mathcal{L}_u w=0$ \mbox{in $\Omega$} while $w(\eta)=1\ge \eta=\phi_u(x,\eta)$ for $(x,\eta)\in\partial\Omega$. The comparison principle then ensures $\phi_u\le 1$ \mbox{in $\bar{\Omega}$}. Taking $v(x,\eta):=\eta (1+u(x))$ for $(x,\eta)\in\bar{\Omega}$, 
we have $\mathcal{L}_u v=0$ \mbox{in $\Omega$} and $v(x,\eta)\le \eta =\phi_u(x,\eta)$ for $(x,\eta)\in\partial\Omega$. We conclude that $\phi_u\ge v$ in $\bar{\Omega}$ again by the comparison principle, and \eqref{266} and \eqref{36a} follow. It remains to check that $\phi_u$ is even. However, {$u$ being} even, $\partial_x u$ is odd and $\partial_x^2 u$ is even, and it is easily seen that $\tilde{\phi}(x,\eta):=\phi_u(-x,\eta)$ satisfies \eqref{23}-\eqref{24} as well, whence $\tilde{\phi}=\phi_u$ by uniqueness.
\end{proof}

We next turn to the continuity property of $\phi_u$ with respect to $u\in\mathcal{C}$.

\begin{lem}\label{L33}
The mapping $\big(u\mapsto \phi_u\big):\mathcal{C}\longrightarrow W_{2}^2(\Omega)$ is continuous when $\mathcal{C}$ is endowed with the topology of $W_{2}^2(-1,1)$.
\end{lem}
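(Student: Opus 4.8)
The plan is to prove sequential continuity. Let $(u_j)_{j\ge 1}$ be a sequence in $\mathcal{C}$ converging to some $u$ in $W_2^2(-1,1)$; since $\mathcal{C}$ is closed in $W_2^2(-1,1)$ by Lemma~\ref{L31}, the limit $u$ belongs to $\mathcal{C}$ and $\phi_u$ is well defined by Lemma~\ref{L32}. We must show $\phi_{u_j}\to\phi_u$ in $W_2^2(\Omega)$. To begin with, the one-dimensional Sobolev embedding $W_2^2(-1,1)\hookrightarrow C^1([-1,1])$ gives $u_j\to u$ and $\partial_x u_j\to\partial_x u$ uniformly on $[-1,1]$, while $1+u_j\ge 1-r_0/2>0$ by Lemma~\ref{L31}. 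Hence the second-order coefficients of $\mathcal{L}_{u_j}$ converge uniformly on $\bar\Omega$ to those of $\mathcal{L}_u$, whereas its first-order coefficient and the source term $f_{u_j}$ contain the summand $-\e^2\eta\,\partial_x^2 u_j/(1+u_j)$, which a priori converges only in $L_2(-1,1)$. Setting $w_j:=\Phi_{u_j}-\Phi_u=\phi_{u_j}-\phi_u\in W_{2,D}^2(\Omega)$ and subtracting the equations \eqref{23a}--\eqref{24a} solved by $\Phi_{u_j}$ and $\Phi_u$, one gets
\begin{equation*}
\mathcal{L}_{u_j} w_j = g_j \quad\text{in}\ \Omega\,,\qquad w_j=0\quad\text{on}\ \partial\Omega\,,\qquad g_j:=\big(f_u-f_{u_j}\big)+\big(\mathcal{L}_u-\mathcal{L}_{u_j}\big)\Phi_u\,.
\end{equation*}

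The main technical point is to check that $g_j\to 0$ in $L_2(\Omega)$, the difficulty being precisely the lack of uniform convergence of the $\partial_x^2 u_j$--dependent coefficient. The terms of $g_j$ built from coefficients of $\mathcal{L}_u-\mathcal{L}_{u_j}$ not involving $\partial_x^2 u_j$ are products of functions converging uniformly to $0$ with one of $\partial_x^2\Phi_u$, $\partial_x\partial_\eta\Phi_u$, $\partial_\eta\Phi_u\in L_2(\Omega)$, hence converge to $0$ in $L_2(\Omega)$. For the remaining terms I would write
\begin{equation*}
\frac{\partial_x^2 u_j}{1+u_j}-\frac{\partial_x^2 u}{1+u}=\frac{\partial_x^2 u_j-\partial_x^2 u}{1+u_j}+\partial_x^2 u\,\frac{u-u_j}{(1+u_j)(1+u)}\,:
\end{equation*}
the last summand converges to $0$ in $L_\infty(-1,1)$ since $0\le\partial_x^2 u\le r_0$, while the first converges to $0$ in $L_2(-1,1)$ and stays bounded in $L_\infty(-1,1)$, hence converges to $0$ in $L_p(-1,1)$ for every $p<\infty$ by interpolation. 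Consequently $\e^2\eta\big(\tfrac{\partial_x^2 u_j}{1+u_j}-\tfrac{\partial_x^2 u}{1+u}\big)\to 0$ in $L_p(\Omega)$ for every finite $p$; combining this with the two-dimensional embedding $W_2^1(\Omega)\hookrightarrow L_r(\Omega)$, $r<\infty$ (which gives $\partial_\eta\Phi_u\in L_r(\Omega)$), and Hölder's inequality with $\tfrac1p+\tfrac1r=\tfrac12$, the contribution of $f_u-f_{u_j}$ and the $\partial_\eta\Phi_u$--contribution of $\big(\mathcal{L}_u-\mathcal{L}_{u_j}\big)\Phi_u$ also converge to $0$ in $L_2(\Omega)$. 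Altogether $g_j\to 0$ in $L_2(\Omega)$.

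Since by Lemma~\ref{L31} the operators $-\mathcal{L}_{u_j}$ are uniformly elliptic on the fixed rectangle $\Omega$ with coefficients bounded in $L_\infty(\Omega)$ uniformly in $j$, the a priori estimate of \cite[Chapt.~3, Thm.~10.1]{LadyzhenskayaUraltsevaLQEE} applies with a constant $c$ independent of $j$:
\begin{equation*}
\|w_j\|_{W_2^2(\Omega)}\le c\big(\|w_j\|_{L_2(\Omega)}+\|g_j\|_{L_2(\Omega)}\big)\,.
\end{equation*}
The secondary obstacle is then to absorb the $L_2$ term, which the equation alone does not control; I would do this by compactness. By \eqref{36a} the sequence $(\Phi_{u_j})$ is bounded in $W_2^2(\Omega)$, so any subsequence has a further subsequence along which $\Phi_{u_j}\rightharpoonup\Psi$ in $W_2^2(\Omega)$ and, by the compact embeddings of $W_2^2(\Omega)$ into $W_2^1(\Omega)$ and into $C(\bar\Omega)$ in dimension two, $\Phi_{u_j}\to\Psi$ in $W_2^1(\Omega)$ and in $C(\bar\Omega)$; in particular $\Psi\in W_{2,D}^2(\Omega)$. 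Passing to the limit in \eqref{23a} written for $u_j$ — using that the second-order coefficients converge uniformly and $\partial^2\Phi_{u_j}\rightharpoonup\partial^2\Psi$ weakly in $L_2(\Omega)$, that the first-order coefficient converges in every $L_p(\Omega)$, $p<\infty$, while $\partial_\eta\Phi_{u_j}\to\partial_\eta\Psi$ in $L_2(\Omega)$, and that $f_{u_j}\to f_u$ in $L_2(\Omega)$ by the previous step — one finds that $\Psi$ solves \eqref{23a}--\eqref{24a} with $u$, whence $\Psi=\Phi_u$ by the uniqueness part of Lemma~\ref{L32}. As the limit is independent of the chosen subsequence, $\Phi_{u_j}\to\Phi_u$ in $C(\bar\Omega)$, so $w_j\to 0$ in $L_2(\Omega)$; together with $g_j\to 0$ in $L_2(\Omega)$ and the a priori estimate, this yields $w_j\to 0$ in $W_2^2(\Omega)$, i.e. $\phi_{u_j}\to\phi_u$ in $W_2^2(\Omega)$, which proves the lemma.
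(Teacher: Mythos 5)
Your proof is correct, but it follows a genuinely different route from the paper's. The paper packages the whole argument abstractly: it shows that $u\mapsto A(u):=-\mathcal{L}_u$ is continuous from $\mathcal{C}$ into $\mathcal{L}\big(W_{2,D}^2(\Omega),L_2(\Omega)\big)$ and that $u\mapsto f_u$ is continuous into $L_2(\Omega)$, and then invokes the continuity (indeed analyticity) of the inversion map $\ell\mapsto\ell^{-1}$ on the open set of invertible bounded operators, so that $\Phi_u=A(u)^{-1}(-f_u)$ depends continuously on $u$; invertibility of each $A(u)$ had already been established in Lemma~\ref{L32}. Your proof instead subtracts the equations for $\Phi_{u_j}$ and $\Phi_u$, shows $g_j\to 0$ in $L_2(\Omega)$, applies the (uniform-in-$j$) a priori estimate from \cite{LadyzhenskayaUraltsevaLQEE}, and then handles the $\|w_j\|_{L_2}$ term on the right-hand side through a weak-compactness/uniqueness argument. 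Both proofs rest on the same essential observation: all coefficients of $\mathcal{L}_{u_j}-\mathcal{L}_u$ except the one involving $\partial_x^2u_j$ converge uniformly, while the latter is bounded in $L_\infty$ and converges in $L_2$, hence converges in every $L_p$, $p<\infty$, which combined with the embedding $W_2^1(\Omega)\hookrightarrow L_r(\Omega)$, $r<\infty$, and H\"older's inequality suffices. In your version this interpolation appears explicitly; in the paper it is implicit in the short justification of the continuity of $u\mapsto A(u)$, which relies on the boundedness of $\mathcal{C}$ in $W_\infty^2(-1,1)$. The paper's route is shorter and avoids the compactness step entirely, since continuity of operator inversion immediately controls $A(u_j)^{-1}-A(u)^{-1}$ in operator norm and hence $\|w_j\|_{W_2^2}$ directly; your route is more elementary and self-contained, re-deriving in concrete form what the abstract inversion argument gives for free, at the price of the extra compactness/uniqueness step to close the loop.
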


\begin{proof}
Let $\mathcal{C}$ be endowed with the topology of $W_{2}^2(-1,1)$. Given $u\in\mathcal{C}$, we define a bounded linear operator $A(u)\in\mathcal{L}\big(W_{2,D}^2(\Omega), L_2(\Omega)\big)$ by
$$
A(u)\Phi:=-\mathcal{L}_u\Phi\ ,\quad \Phi\in W_{2,D}^2(\Omega)\ ,
$$
{and note that $A$ is continuous from $\mathcal{C}$ in $\mathcal{L}\left( W_{2,D}^2(\Omega),L_2(\Omega) \right)$, thanks to the continuous embedding of $W_2^2(-1,1)$ in $W_\infty^ 1(-1,1)$ and the boundedness of $\mathcal{C}$ in $W_\infty^2(-1,1)$.}
Then \cite[Chapt.~3, Thm.~9.1 $\&$ Thm.~10.1]{LadyzhenskayaUraltsevaLQEE} (see the proof of Lemma~\ref{L32}) warrants that $A(u)$ is invertible for each $u\in\mathcal{C}$. Owing to the continuity (in fact: analyticity) of the inversion map $\ell\mapsto\ell^{-1}$ of bounded linear operators, we conclude that 
$$
\mathcal{C}\longrightarrow \mathcal{L}\big(L_2(\Omega),W_{2,D}^2(\Omega)\big)\ ,\quad u\mapsto A(u)^{-1}
$$
is continuous. One then checks that $u\mapsto f_u$ is continuous from $\mathcal{C}$ to $L_2(\Omega)$, where $f_u$ is given in \eqref{f}. Consequently, 
$$
\big(u\mapsto \Phi_u=A(u)^{-1}(-f_u)\big):\mathcal{C}\longrightarrow W_{2,D}^2(\Omega) 
$$
is continuous. {Recalling that} $\phi_u(x,\eta)=\Phi_u(x,\eta)+\eta$ for $(x,\eta)\in\bar{\Omega}$ gives the claim.
\end{proof}

To obtain estimates on solutions to \eqref{3}-\eqref{4} we need estimates on the gradient of $\phi_u$ on the boundary $\eta=1$ as provided by the following lemma.

\begin{lem}\label{L34}
There is a constant $c_2>0$ depending only on $r_0\in (0,2)$ and $\e\in (0,1)$ such that, given $u\in\mathcal{C}$, the corresponding solution $\phi_u\in W_{2}^2(\Omega)$ to \eqref{23}-\eqref{24} satisfies
\begin{equation}\label{311}
\|\partial_\eta\phi_u(\cdot,1)\|_{W_2^{1/2}(-1,1)}\le c_2
\end{equation}
and
\begin{equation}\label{312}
0\le\partial_\eta\phi_u(x,1)\le 1+2\e^2\ ,\quad x\in (-1,1)\ .
\end{equation}
\end{lem}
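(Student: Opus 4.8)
The plan is to establish the two bounds separately, using the comparison principle for the lower bound in \eqref{312} together with an explicit barrier for the upper bound, and then to derive the fractional Sobolev estimate \eqref{311} from the $W_2^2(\Omega)$ regularity already secured in Lemma~\ref{L32} via the trace theorem, after differentiating the equation in a suitable way.

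First I would prove \eqref{312}. For the lower bound, recall from \eqref{266} that $\phi_u(x,\eta)\ge\eta(1+u(x))$ with equality at $\eta=1$ (since $u(\pm1)=0$ gives $\phi_u(\pm1,1)=1=1+u(\pm1)$ on the lateral sides, and more to the point $\phi_u(x,1)=1=(1+u(x))\cdot1$ whenever... actually $\phi_u(x,1)=1$ from \eqref{24} while $\eta(1+u(x))|_{\eta=1}=1+u(x)\le1$). Hmm, so the two functions agree only when $u(x)=0$. Instead, the clean argument is: $\phi_u(x,1)=1\ge\phi_u(x,\eta)$ for $\eta\le1$ by \eqref{266}, so $\eta\mapsto\phi_u(x,\eta)$ attains its maximum over $[0,1]$ at the endpoint $\eta=1$, whence $\partial_\eta\phi_u(x,1)\ge0$; one makes this rigorous either by a Hopf-type boundary point lemma applied to $1-\phi_u\ge0$ (which satisfies $\mathcal{L}_u(1-\phi_u)=0$) at the boundary point $(x,1)$, or by noting $\phi_u(x,\cdot)$ is smooth up to $\eta=1$ away from the corners and using the one-sided difference quotient. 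For the upper bound I would construct an explicit supersolution. A natural candidate is $w(x,\eta):=\eta+\beta(1-\eta^2)$ or, better adapted to the operator, something like $w(x,\eta)=\eta+\beta(1-\eta)(\text{something in }x)$; one computes $\mathcal{L}_u w$ using the expression for $f_u=\mathcal{L}_u\eta$ in \eqref{f} together with $\mathcal{L}_u(1-\eta^2)$, checks that for $\beta=\beta(\e,r_0)$ large enough $\mathcal{L}_u w\le0=\mathcal{L}_u\phi_u$ in $\Omega$ while $w\ge\eta=\phi_u$ on $\partial\Omega$, and concludes $\phi_u\le w$ in $\bar\Omega$ by the comparison principle; since $w=\phi_u$ at $\eta=1$, differentiating gives $\partial_\eta\phi_u(x,1)\le\partial_\eta w(x,1)$, and tuning the barrier yields the constant $1+2\e^2$. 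The bookkeeping here — getting the \emph{explicit} constant $1+2\e^2$ rather than just some $C(\e,r_0)$ — is the one place that needs care, and is likely the main obstacle: one must choose the barrier so that the $\e^2$-order terms in $f_u$ are exactly absorbed.

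Next I would prove \eqref{311}. By Lemma~\ref{L32}, $\phi_u\in W_2^2(\Omega)$ with $\|\phi_u\|_{W_2^2(\Omega)}\le c_1(r_0,\e)$ uniformly in $u\in\mathcal{C}$. The trace of $\partial_\eta\phi_u$ on the flat boundary $\{\eta=1\}$ therefore lies in $W_2^{1/2}(-1,1)$ with norm controlled by $\|\partial_\eta\phi_u\|_{W_2^1(\Omega)}\le\|\phi_u\|_{W_2^2(\Omega)}\le c_1$; this is exactly the standard trace theorem $W_2^1(\Omega)\hookrightarrow W_2^{1/2}(\partial\Omega)$ applied on the rectangle $\Omega=(-1,1)\times(0,1)$, whose boundary is Lipschitz. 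So \eqref{311} follows immediately with $c_2=c_2(r_0,\e)$ depending on $c_1$ and the trace constant of the fixed rectangle. The only subtlety is that the trace constant is that of $\Omega$, which is fixed and independent of $u$, so the bound is uniform over $\mathcal{C}$ as required.

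In summary, the order of work is: (i) lower bound in \eqref{312} via \eqref{266} and a boundary-point/Hopf argument; (ii) upper bound in \eqref{312} via an explicit polynomial supersolution $w$ matching $\phi_u$ at $\eta=1$, with the barrier chosen to produce the constant $1+2\e^2$; (iii) \eqref{311} via \eqref{36a} and the trace theorem on the fixed rectangle. I expect step (ii) to demand the most attention, since it is where the structure of $\mathcal{L}_u$ and the precise $\e$-dependence enter; steps (i) and (iii) are soft consequences of the comparison principle and standard trace estimates, respectively.
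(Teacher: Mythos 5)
Your steps (i) and (iii) are sound and coincide with the paper's own argument: the lower bound in \eqref{312} follows from $\phi_u\le 1$ in $\bar\Omega$ together with $\phi_u(\cdot,1)=1$, and \eqref{311} is exactly the trace theorem on the fixed rectangle $\Omega$ applied to $\partial_\eta\phi_u\in W_2^1(\Omega)$ combined with the uniform bound \eqref{36a}, the trace constant depending only on $\Omega$.

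Step (ii), however, contains a genuine directional error. You propose a \emph{supersolution} $w$ with $\mathcal{L}_u w\le 0$ in $\Omega$ and $w\ge \eta=\phi_u$ on $\partial\Omega$, so that $\phi_u\le w$ in $\bar\Omega$ with equality on $\{\eta=1\}$, and then claim $\partial_\eta\phi_u(x,1)\le \partial_\eta w(x,1)$. The inequality goes the other way: if $\phi_u\le w$ on $\bar\Omega$ and $\phi_u(x,1)=w(x,1)$, then for $\eta<1$ one has $\big(\phi_u(x,\eta)-\phi_u(x,1)\big)/(\eta-1)\ \ge\ \big(w(x,\eta)-w(x,1)\big)/(\eta-1)$ (division by the negative quantity $\eta-1$ reverses the sign), so letting $\eta\to 1^-$ yields $\partial_\eta\phi_u(x,1)\ge \partial_\eta w(x,1)$ — an upper barrier touching $\phi_u$ along the top boundary only produces a \emph{lower} bound for $\partial_\eta\phi_u(\cdot,1)$. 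To get the upper bound in \eqref{312} you must squeeze $\phi_u$ from \emph{below} by a subsolution that equals $1$ at $\eta=1$: this is what the paper does, taking $w_\alpha(\eta)=\eta^{1+\alpha}$, which satisfies $w_\alpha\le\eta=\phi_u$ on $\partial\Omega$ and, using $u\le 0$ and $0\le\partial_x^2u\le r_0<2$, $\mathcal{L}_u w_\alpha\ge \frac{(1+\alpha)\eta^{\alpha-1}}{(1+u)^2}\,[\alpha-2\e^2]\ge 0$ for the choice $\alpha=2\e^2$; the comparison principle then gives $\phi_u\ge w_{2\e^2}$ with equality at $\eta=1$, and the one-sided difference quotient yields $\partial_\eta\phi_u(x,1)\le \partial_\eta w_{2\e^2}(1)=1+2\e^2$, which also explains where the explicit constant comes from. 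Your candidate barriers (e.g. $\eta+\beta(1-\eta^2)$ with $\beta>0$) lie above $\phi_u$ and would at best give lower bounds such as $\partial_\eta\phi_u(x,1)\ge 1-2\beta$, so the plan as written does not prove \eqref{312}; it can be repaired by switching to a lower barrier of the above type.
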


\begin{proof}
According to \cite[Chapt.~2, Thm.~5.4]{Necas67} there is a positive constant $c$ depending only on $\Omega$ such that
$$
\|\partial_\eta\phi_u(\cdot,1)\|_{W_2^{1/2}(-1,1)}\le c\,  \|\phi_u\|_{W_2^{2}(\Omega)}
$$
from which \eqref{311} readily follows by \eqref{36a}. Next, set $w_\alpha(\eta):=\eta^{1+\alpha}$ for $\eta\in [0,1]$ and $\alpha>0$. Then $w_\alpha(\eta)\le \eta=\phi_u(x,\eta)$ for $(x,\eta)\in\partial\Omega$ and
\begin{equation*}
\begin{split}
\mathcal{L}_uw_\alpha &=\frac{1+\e^2\eta^2(\partial_x u)^2}{(1+u)^2}\alpha (1+\alpha)\eta^{\alpha-1}+\e^2(\alpha+1)\eta^{\alpha+1}\left[2\left(\frac{\partial_x u}{1+u}\right)^2-\frac{\partial_x^2 u}{1+u}\right]\\
&\ge \frac{\alpha(1+\alpha)\eta^{\alpha-1}}{(1+u)^2}-\e^2(\alpha+1)\eta^{\alpha+1}\frac{\partial_x^2 u}{1+u}\\
&\ge \frac{(1+\alpha)\eta^{\alpha-1}}{(1+u)^2}\left[\alpha-\e^2\eta^2 (1+u)\ \partial_x^2 u \right]\ge \frac{(1+\alpha)\eta^{\alpha-1}}{(1+u)^2}\left[\alpha-2\e^2\right]
\end{split}
\end{equation*}
in $\Omega$, where we used $u\le 0$ and $0\le \partial_x^2u \le r_0<2$ to obtain the last inequality. Consequently, choosing $\alpha = 2\e^2$, we realize that $\mathcal{L}_uw_{2\e^2}\ge 0$ in~$\Omega$, and we infer from the comparison principle that 
$$
\phi_u(x,\eta)\ge w_{2\e^2}(\eta)\ ,\quad (x,\eta)\in\bar{\Omega}\ .
$$
In particular, for $\eta\in (0,1)$,
$$
\frac{1}{\eta-1}\left(\phi_u(x,\eta)-\phi_u(x,1)\right)=\frac{1}{\eta-1}\left(\phi_u(x,\eta)-1\right)\le \frac{1}{\eta-1}\left(w_{2\e^2}(\eta)-w_{2\e^2}(1)\right)\ ,
$$
whence $\partial_\eta\phi_u(x,1)\le {\partial_\eta w_{2\e^2}(1) =} 1+2\e^2$ for $x\in (-1,1)$. Since $\phi_u\le 1$ in $\Omega$ and $\phi_u(x,1)=1$, we also have $\partial_\eta\phi_u(x,1)\ge 0$ for $x\in (-1,1)$.
\end{proof}

Next, given $u\in\mathcal{C}$, we set
\begin{equation}\label{g}
g_u(x):= \frac{1+\e^2(\partial_x u(x))^2}{(1+u(x))^2} \, \vert\partial_\eta\phi_u(x,1)\vert^2\ ,\quad  x\in (-1,1)\ ,
\end{equation}
and observe that Lemma~\ref{L31} and \eqref{312} guarantee that $g_u\in L_\infty(-1,1)$. Thus, for each $\lambda>0$ there is a unique solution $v=S(u)$ in $W_{\infty,D}^2(-1,1)$ to the linear problem
\begin{align}
\partial_x^2 v(x) &= \lambda g_u(x)\ , & x\in (-1,1)\ ,\label{313}\\
v(x) &=0\ , & x=\pm 1\ .\label{314}
\end{align}
Actually, we have:

\begin{lem}\label{L34a}
If $\mathcal{C}$ is endowed with the topology of $W_{2}^2(-1,1)$, then $S:\mathcal{C}\rightarrow W_{2}^{2+\sigma}(-1,1)$ is continuous for each $\sigma\in [0,1/2)$, and there is a positive constant $c_3(\sigma)$ depending only $r_0$, $\e$, and $\sigma$ such that
\begin{equation}\label{315}
\|S(u)\|_{W_2^{2+\sigma}(-1,1)}\le \lambda\ c_3(\sigma)\ ,\quad  u\in\mathcal{C}\ .
\end{equation}
Moreover, $S(u)$ is even and convex for $u\in\mathcal{C}$.
\end{lem}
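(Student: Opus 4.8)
The plan is to establish each assertion of Lemma~\ref{L34a} by combining the $L_\infty$-bounds already in hand with elementary facts about solving $\partial_x^2 v = \lambda g_u$ on $(-1,1)$ with homogeneous Dirichlet data. First I would record the explicit solution formula: since \eqref{313}--\eqref{314} is a two-point boundary value problem, $S(u)$ is given by convolution of $g_u$ against the Green's function of $-\partial_x^2$ on $(-1,1)$, namely
\begin{equation*}
S(u)(x) = -\lambda\int_{-1}^{1} G(x,y)\, g_u(y)\,\mathrm{d}y\ ,\qquad G(x,y) = \frac{(1-|x|\vee|y|)(1+|x|\wedge|y|)}{2}\wedge\cdots
\end{equation*}
but in fact it is cleaner to avoid the formula and argue directly. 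From Lemma~\ref{L31}, \eqref{36}, \eqref{36c}, and \eqref{312} we get a bound $\|g_u\|_{L_\infty(-1,1)}\le c(r_0,\e)$ uniformly in $u\in\mathcal{C}$, since $g_u$ is a product of the bounded factors $1+\e^2(\partial_x u)^2$, $(1+u)^{-2}$, and $|\partial_\eta\phi_u(\cdot,1)|^2$. Hence $\partial_x^2 S(u) = \lambda g_u$ is bounded in $L_\infty(-1,1)$ by $\lambda c(r_0,\e)$, and integrating twice using $S(u)(\pm 1)=0$ controls $S(u)$ and $\partial_x S(u)$ in $L_\infty$; this yields \eqref{315} for $\sigma = 0$.

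For the fractional gain $\sigma\in[0,1/2)$, I would upgrade the estimate on the trace of $\partial_\eta\phi_u$. The point is that \eqref{311} gives $\partial_\eta\phi_u(\cdot,1)\in W_2^{1/2}(-1,1)$ uniformly, and by the Sobolev embedding $W_2^{1/2}(-1,1)\hookrightarrow L_p(-1,1)$ for every finite $p$ (or directly $W_2^{1/2}\hookrightarrow W_2^{1/2}$ combined with the algebra-type / multiplier properties of $W_2^s$ for $s<1/2$ in one dimension, using that $W_2^{1/2}$ multiplied by $W_\infty^{1}\cap W_2^{1/2}$ stays in $W_2^{\sigma}$), one gets $g_u \in W_2^{\sigma}(-1,1)$ with a uniform bound $\|g_u\|_{W_2^{\sigma}(-1,1)}\le c(r_0,\e,\sigma)$. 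Here the regularity of the coefficients $1+\e^2(\partial_x u)^2$ and $(1+u)^{-2}$ is harmless since $u\in\mathcal{C}$ is bounded in $W_\infty^2(-1,1)$, hence in $W_2^{2}$, and $W_2^1\hookrightarrow W_2^{\sigma}$ is a Banach algebra for $\sigma<1/2$. Then elliptic (here just ODE) regularity for $\partial_x^2 v = \lambda g_u$ with zero Dirichlet data gives $\|S(u)\|_{W_2^{2+\sigma}(-1,1)}\le \lambda c_3(\sigma)$, which is \eqref{315}.

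Continuity of $S:\mathcal{C}\to W_2^{2+\sigma}(-1,1)$ when $\mathcal{C}$ carries the $W_2^2$-topology I would deduce from linearity of the solution operator together with the continuity already established in Lemma~\ref{L33} and Lemma~\ref{L34}: if $u_n\to u$ in $W_2^2(-1,1)$ with all $u_n,u\in\mathcal{C}$, then $\phi_{u_n}\to\phi_u$ in $W_2^2(\Omega)$ by Lemma~\ref{L33}, hence the traces $\partial_\eta\phi_{u_n}(\cdot,1)\to\partial_\eta\phi_u(\cdot,1)$ in $W_2^{1/2}(-1,1)$ by the trace theorem, and the coefficients $1+\e^2(\partial_x u_n)^2$ and $(1+u_n)^{-2}$ converge in $W_\infty^1(-1,1)$ (using $W_2^2\hookrightarrow W_\infty^1$ and that $u_n$ stays away from $-1$ uniformly by \eqref{36}); multiplying, $g_{u_n}\to g_u$ in $W_2^{\sigma}(-1,1)$, and since $v\mapsto$ (solution of $\partial_x^2 v=\lambda g$, $v(\pm1)=0$) is a bounded linear map $W_2^{\sigma}\to W_2^{2+\sigma}$, continuity follows. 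Finally, $g_u\ge 0$ gives $\partial_x^2 S(u)\ge 0$, so $S(u)$ is convex, and evenness of $S(u)$ is inherited from evenness of $g_u$ (which follows from $u$ even $\Rightarrow$ $\partial_x u$ odd, $(1+u)^{-2}$ even, and $\phi_u$ even by Lemma~\ref{L32}, so $\partial_\eta\phi_u(\cdot,1)$ even) by the uniqueness of the solution to \eqref{313}--\eqref{314}. The main obstacle is the bookkeeping in the second paragraph: verifying that the product $g_u$ genuinely lies in $W_2^{\sigma}$ uniformly requires the correct one-dimensional pointwise-multiplication estimate for fractional Sobolev spaces (valid precisely because $\sigma<1/2$ keeps $W_2^{\sigma}$ away from requiring $L_\infty$ control of a derivative), and one must check that the factor $|\partial_\eta\phi_u(\cdot,1)|^2$ — a square of a $W_2^{1/2}$ function — is still in $W_2^{\sigma}$ for $\sigma<1/2$, which again uses the 1D fractional multiplication rule together with the $L_\infty$ bound \eqref{312}.
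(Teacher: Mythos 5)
Your proposal matches the paper's argument in all essentials: both establish the $W_2^\sigma$-regularity of $g_u$ via the trace estimate \eqref{311} together with fractional Sobolev pointwise multiplication (the paper cites the bilinear estimates $W_2^{1/2}\cdot W_2^{1/2}\hookrightarrow W_2^{\sigma_1}$ and $W_2^1\cdot W_2^{\sigma_1}\hookrightarrow W_2^\sigma$ for $0<\sigma<\sigma_1<1/2$ from Amann), then invoke ODE regularity for \eqref{313}--\eqref{314}; continuity via Lemma~\ref{L33} and linearity of the solution map, as well as evenness and convexity, are treated the same way. Two of your side remarks should be tightened though: the parenthetical embedding $W_2^{1/2}\hookrightarrow L_p$ only yields $g_u\in L_p$, not $g_u\in W_2^\sigma$, so that route does not close by itself, and the phrase ``$W_2^1\hookrightarrow W_2^\sigma$ is a Banach algebra for $\sigma<1/2$'' is not the relevant fact (indeed $W_2^\sigma(-1,1)$ with $\sigma<1/2$ is not an algebra) --- what is actually used is the bilinear product estimate across different Sobolev scales.
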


\begin{proof} 
Lemma~\ref{L33} together with \cite[Chapt.~{2}, Thm.~5.4]{Necas67} and \eqref{311} imply that $u\mapsto \partial_\eta\phi_u(\cdot,1)$ is continuous and bounded as a mapping $\mathcal{C}\rightarrow W_{2}^{1/2}(-1,1)$. 
In the following, given two Banach spaces  $X$ and $Y$ of real-valued functions, we write $X \hookrightarrow Y$ to indicate that $X$ is continuously embedded in $Y$ and we set $X\cdot Y := \{ fg\ : \ (f,g)\in X\times Y\}$. Let $0<\sigma<\sigma_1<1/2$.
Since pointwise multiplication 
$$
W_{2}^{1/2}(-1,1)\cdot W_{2}^{1/2}(-1,1) \hookrightarrow W_{2}^{\sigma_1}(-1,1)
$$ 
is bilinear and continuous according to \cite[Thm.~4.1 \& Rem.~4.2(d)]{AmannMultiplication}, we infer that $u\mapsto \vert\partial_\eta\phi_u(\cdot,1)\vert^2$ defines a bounded and continuous mapping $\mathcal{C}\longrightarrow  W_{2}^{\sigma_1}(-1,1)$. Noticing that 
\begin{equation}\label{cc}
\mathcal{C}\longrightarrow W_2^1(-1,1)\ ,\quad u\mapsto  \frac{1+\e^2(\partial_x u(x))^2}{(1+u(x))^2}
\end{equation}
is continuous and bounded as well by Lemma~\ref{L31} and that
$$
W_{2}^{1}(-1,1)\cdot W_{2}^{\sigma_1}(-1,1) \hookrightarrow W_{2}^{\sigma}(-1,1)
$$ 
by \cite[Thm.~4.1 \& Rem.~4.2(d)]{AmannMultiplication}, we see that $u\mapsto g_u$ is continuous and bounded from $\mathcal{C}$ to $W_{2}^{\sigma}(-1,1)$. Consequently, $S:\mathcal{C}\longrightarrow W_{2}^{2+\sigma}(-1,1)$ is continuous and satisfies \eqref{315}.
Clearly, $S(u)$ is even for~$u\in\mathcal{C}$ since $u$ and $\phi_u(\cdot,1)$ are even by Lemma~\ref{L32}, and $S(u)$ is convex since $\partial_x^2 S(u)\ge 0$ by \eqref{g} and \eqref{313}.
\end{proof}

We are now in a position to construct solutions to \eqref{23}-\eqref{34} for small values of $\lambda$ by applying  Schauder's fixed point theorem to the map $S$.

\begin{prop}\label{P1}
There exists $\lambda_0>0$ independent of $\e\in (0,1)$ such that \eqref{23}-\eqref{34} admits for each $\lambda\in (0,\lambda_0]$ a solution
$$
(u,\phi_u)\in W_\infty^2(-1,1) \times W_2^2(\Omega)
$$
satisfying
$$
0 \ge u(x)\ge -\frac{r_0}{2}>-1 \quad\text{ and }\quad 0 \le \partial_x^2 u(x) \le r_0\ ,\quad x\in (-1,1)\ .
$$
Moreover, $u$ is even and belongs to $W_2^{2+\sigma}(-1,1)$ for any $\sigma\in [0,1/2)$.
\end{prop}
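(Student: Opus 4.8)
The plan is to apply Schauder's fixed point theorem to the map $S:\mathcal{C}\to W_2^{2+\sigma}(-1,1)$ constructed in Lemma~\ref{L34a}. The set $\mathcal{C}$ is convex, closed, and bounded in $W_2^2(-1,1)$ by Lemma~\ref{L31}, and it is a compact subset of $W_2^2(-1,1)$ as soon as we observe that $\mathcal{C}$ is bounded in $W_2^{2+\sigma}(-1,1)$-norms only through its image under $S$; more carefully, I would work in the Banach space $W_2^2(-1,1)$, regard $\mathcal{C}$ as a closed bounded convex subset there, and show that $S(\mathcal{C})\subseteq\mathcal{C}$ and $S$ is continuous on $\mathcal{C}$ with relatively compact image. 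Continuity of $S$ is exactly Lemma~\ref{L34a}. Relative compactness of $S(\mathcal{C})$ in $W_2^2(-1,1)$ follows from the bound \eqref{315}, which places $S(\mathcal{C})$ in a bounded subset of $W_2^{2+\sigma}(-1,1)$ for some fixed $\sigma\in(0,1/2)$, together with the compact embedding $W_2^{2+\sigma}(-1,1)\hookrightarrow W_2^2(-1,1)$.

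The one genuine point to check is the self-mapping property $S(\mathcal{C})\subseteq\mathcal{C}$, and this is where the smallness of $\lambda_0$ enters. Given $u\in\mathcal{C}$, Lemma~\ref{L34a} already tells us that $v:=S(u)$ is even and convex, so $\partial_x^2 v\ge 0$ and $v\in W_{2,D}^2(-1,1)$. It remains to verify the upper bound $\partial_x^2 v\le r_0$. From \eqref{313} we have $\partial_x^2 v=\lambda g_u$, so it suffices to bound $\|g_u\|_{L_\infty(-1,1)}$ uniformly over $u\in\mathcal{C}$ and over $\e\in(0,1)$. Using the definition \eqref{g}, the bound $0\le\partial_\eta\phi_u(x,1)\le 1+2\e^2\le 3$ from \eqref{312}, the estimate $|\partial_x u(x)|\le 2r_0$ from \eqref{36c}, and $1+u(x)\ge 1-r_0/2=(2-r_0)/2$ from \eqref{36}, one gets an explicit constant
\[
\|g_u\|_{L_\infty(-1,1)}\le \frac{1+4\e^2 r_0^2}{(1-r_0/2)^2}\,(1+2\e^2)^2 \le \frac{4(1+4r_0^2)}{(2-r_0)^2}\cdot 9=:M(r_0)\,,
\]
which is independent of $\e\in(0,1)$. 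Hence choosing $\lambda_0:=r_0/M(r_0)>0$ guarantees $\partial_x^2 v=\lambda g_u\le \lambda_0 M(r_0)=r_0$ for every $\lambda\in(0,\lambda_0]$, so $v\in\mathcal{C}$. (Also $v\le 0$ and $v\ge -r_0/2$ then follow automatically from membership in $\mathcal{C}$ via \eqref{36}.)

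With the self-mapping, continuity, and compactness in hand, Schauder's theorem yields a fixed point $u=S(u)\in\mathcal{C}$; by construction this $u$ solves \eqref{33}-\eqref{34} with $\phi=\phi_u$ the solution to \eqref{23}-\eqref{24} provided by Lemma~\ref{L32}, so $(u,\phi_u)$ solves \eqref{23}-\eqref{34}. The stated bounds on $u$ and $\partial_x^2 u$ are precisely the defining properties of $\mathcal{C}$ combined with \eqref{36}, $u$ is even since $\mathcal{C}$ consists of even functions, and $u\in W_2^{2+\sigma}(-1,1)$ for every $\sigma\in[0,1/2)$ because $u=S(u)$ and Lemma~\ref{L34a} gives $S(u)\in W_2^{2+\sigma}(-1,1)$. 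I expect the only real subtlety to be bookkeeping the $\e$-independence of the constant $M(r_0)$ — everything else is a direct assembly of the preceding lemmas — and, as a minor technical point, making sure the fixed point theorem is applied in $W_2^2(-1,1)$ (where $\mathcal{C}$ is closed and $S$ is continuous with precompact range) rather than in the stronger topology.
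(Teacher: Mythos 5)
Your proof is correct and follows essentially the same route as the paper: verify the self-mapping property of $S$ on $\mathcal{C}$ via the explicit $\varepsilon$-uniform bound on $g_u$ coming from \eqref{36}, \eqref{36c}, and \eqref{312}, then combine the continuity and boundedness from Lemma~\ref{L34a} with the compact embedding $W_2^{2+\sigma}(-1,1)\hookrightarrow W_2^2(-1,1)$ to invoke Schauder in $W_2^2(-1,1)$. Your explicit constant $M(r_0)$ with the factor $(1+2\varepsilon^2)^2$ is a slightly more careful bookkeeping than the paper's \eqref{r0} (which drops a square), but this is immaterial to the choice of $\lambda_0$.
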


\begin{proof}
To prove that $S$ maps the closed and convex subset $\mathcal{C}$ of $W_2^2(-1,1)$ into itself note that \eqref{36c}, \eqref{312} together with Lemma~\ref{L31} ensure
\begin{equation}\label{r0}
0\le\partial_x^2 S(u) =\lambda \frac{1+\e^2(\partial_x u(x))^2}{(1+u(x))^2} |\partial_\eta\phi_u(x,1)|^2\le 4\lambda\frac{1+4\e^2 r_0^2}{(2-r_0)^2} (1+2\e^2)
\end{equation}
for $u\in\mathcal{C}$. Thus there is $\lambda_0=\lambda_0(r_0)>0$ sufficiently small and  independent of $\e\in (0,1)$ such that $0\le \partial_x^2 S(u)\le r_0$ for $\lambda\in (0,\lambda_0]$ and $u\in\mathcal{C}$, so it follows from Lemma~\ref{L34a}
that $S$ indeed maps $\mathcal{C}$ into itself. Since $W_2^{2+\sigma}(-1,1)$ embeds compactly in $W_2^{2}(-1,1)$ for $\sigma\in (0,1/2)$, Lemma~\ref{L34a} implies that $S:\mathcal{C}\rightarrow \mathcal{C}$ is continuous and compact and thus has a fixed point $ u\in\mathcal{C}$ enjoying the properties stated in Lemma~\ref{L31}.
\end{proof}

Clearly, a positive lower bound on $\lambda_0$ can be obtained by optimizing its choice according to \eqref{r0}.

\medskip

To finish off the proof of Theorem~\ref{A} it remains to improve the regularity of $(u,\phi_u)$ and to pull it back on the domain $\Omega(u)$ by means of the transformation $T_u$ from \eqref{Tu}.

\begin{cor}\label{c1}
If $(u,{\phi_u})$ is the solution to \eqref{23}-\eqref{34} for $\lambda\in (0,\lambda_0]$ provided by Proposition~\ref{P1},  then  $(u,\psi)$ with $\psi:= \phi_u\circ T_u$ is a solution to \eqref{1}-\eqref{4} with regularity
$$
u\in C^{2+\alpha}\big([-1,1]\big)\ ,\quad  \psi\in W_2^2\big(\Omega(u)\big)\cap C\big(\overline{\Omega(u)}\big)\cap C^{2+\alpha}\big(\Omega(u)\cup\Gamma(u)\big)\ ,
$$
for each $\alpha\in [0,1)$.
\end{cor}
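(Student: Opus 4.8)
The plan is to take the fixed point $(u,\phi_u)$ from Proposition~\ref{P1} and bootstrap its regularity, then transport everything back to the physical domain $\Omega(u)$ via the diffeomorphism $T_u$. The starting point is that $u\in W_2^{2+\sigma}(-1,1)$ for every $\sigma\in[0,1/2)$, so by Sobolev embedding $u\in C^{1+\alpha}([-1,1])$ for every $\alpha\in[0,1)$; in particular the coefficients of $\mathcal{L}_u$ all lie in $C^\alpha(\bar\Omega)$ and $-\mathcal{L}_u$ is uniformly elliptic by the argument in the proof of Lemma~\ref{L32}. Applying Schauder interior-and-boundary elliptic regularity to \eqref{23a}--\eqref{24a} on the rectangle $\Omega$ (away from the four corners), where the right-hand side $f_u$ from \eqref{f} is now $C^\alpha$, upgrades $\Phi_u$, hence $\phi_u=\Phi_u+\eta$, to $C^{2+\alpha}$ on $\Omega$ up to the open edges of the rectangle. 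In particular the trace $\partial_\eta\phi_u(\cdot,1)$ is $C^{1+\alpha}$ on $(-1,1)$, so $g_u$ in \eqref{g} is $C^{1+\alpha}$ (using $u\in C^{2}$, which we re-derive below), and then integrating \eqref{313}--\eqref{314} shows $u\in C^{3+\alpha}([-1,1])$; in any case $u\in C^{2+\alpha}([-1,1])$ for every $\alpha\in[0,1)$ as claimed. One should run this as a short bootstrap loop: $u\in C^{1+\alpha}\Rightarrow$ coefficients in $C^\alpha\Rightarrow\phi_u\in C^{2+\alpha}\Rightarrow g_u\in C^{1+\alpha}\Rightarrow u\in C^{3+\alpha}$, which closes comfortably.

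Next I would verify that $(u,\psi)$ with $\psi:=\phi_u\circ T_u$ solves the original system \eqref{1}--\eqref{4}. By construction in Section~\ref{Sec2}, \eqref{23}--\eqref{24} is equivalent to \eqref{1}--\eqref{2} under the change of variables $T_u$, and \eqref{33}--\eqref{34} is the transported form of \eqref{3}--\eqref{4} using the identity \eqref{26}; since $(u,\phi_u)$ satisfies \eqref{23}--\eqref{34}, the pair $(u,\psi)$ satisfies \eqref{1}--\eqref{4}. Because $u\in C^{2+\alpha}([-1,1])$ with values in $(-1,0)$ by \eqref{36}, the map $T_u$ is a $C^{2+\alpha}$-diffeomorphism of $\overline{\Omega(u)}$ onto $\bar\Omega$ with $C^{2+\alpha}$ inverse \eqref{Tuu}, and it maps $\Omega(u)\cup\Gamma(u)$ onto $\Omega$ together with its four open edges. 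Hence the regularity of $\phi_u$ transfers: $\psi\in C^{2+\alpha}(\Omega(u)\cup\Gamma(u))$, and $\psi\in C(\overline{\Omega(u)})$ because $\phi_u\in C(\bar\Omega)$ (the boundary datum $\eta$ is continuous and the Sobolev embedding $W_2^2(\Omega)\hookrightarrow C(\bar\Omega)$ in two dimensions gives continuity up to the corners).

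For the assertion $\psi\in W_2^2(\Omega(u))$ I would argue that the change of variables $T_u$ preserves $W_2^2$: its Jacobian and first two derivatives are bounded (the second derivatives of $T_u$ involve $\partial_x^2 u\in C^\alpha$, hence are bounded), and its Jacobian determinant $1/(1+u(x))$ is bounded above and below by \eqref{36}, so composition with $T_u$ is a bounded linear isomorphism between $W_2^2(\Omega)$ and $W_2^2(\Omega(u))$. Since $\phi_u\in W_2^2(\Omega)$ by Lemma~\ref{L32}, we conclude $\psi\in W_2^2(\Omega(u))$. Finally the evenness of $\psi$ in $x$ follows from the evenness of $\phi_u$ (Lemma~\ref{L32}) and the fact that $T_u$ commutes with the reflection $x\mapsto-x$ because $u$ is even.

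The main obstacle, such as it is, is the behavior at the four corners of $\Omega(u)$: Schauder theory on the rectangle only gives interior and open-edge regularity, not $C^{2+\alpha}$ up to the corners, which is exactly why the statement is phrased with $C^{2+\alpha}(\Omega(u)\cup\Gamma(u))$ and only continuity on all of $\overline{\Omega(u)}$. One must therefore be careful to state the Schauder step on subdomains avoiding the corners and to obtain the global $W_2^2$ and $C^0$ bounds by the separate (weaker) arguments above rather than trying to push Schauder estimates to the corners; the corner angles are generically not $\pi/2$ once $u$ is non-constant, so no miraculous extra regularity is available there and none is needed.
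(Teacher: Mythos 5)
Your overall plan---bootstrap the regularity of $(u,\phi_u)$ and then transfer it to $\Omega(u)$ through the diffeomorphism $T_u$---is the right one, and the parts of your argument concerning the transfer itself (the $W_2^2$-isomorphism induced by $T_u$, the continuity up to $\overline{\Omega(u)}$, the evenness) are sound. But there are two genuine gaps.

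First, the bootstrap does not start. You claim that $u\in C^{1+\alpha}([-1,1])$ already puts the coefficients of $\mathcal{L}_u$ and the source $f_u$ in $C^\alpha(\bar\Omega)$. Look again at the first-order coefficient of $\mathcal{L}_u$ and at $f_u$ from \eqref{f}: both contain $\partial_x^2 u$. At the point where you want to launch Schauder, all you know is $u\in W_\infty^2(-1,1)\cap W_2^{2+\sigma}(-1,1)$ with $\sigma<1/2$, so $\partial_x^2 u$ is merely bounded (and in $W_2^\sigma$ with $\sigma<1/2$, which does not embed in $C^0$). Thus the lower-order coefficient and $f_u$ are only $L_\infty$, and the Schauder step ``coefficients in $C^\alpha\Rightarrow\phi_u\in C^{2+\alpha}$'' cannot be invoked. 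You must pass through $W_p^2$-theory first: with $C^\alpha$ \emph{principal-part} coefficients and $L_\infty$ lower-order terms and data, one obtains $W_p^2$ regularity for all $p<\infty$, hence $\phi_u\in C^{1+\alpha}$, hence $\partial_\eta\phi_u(\cdot,1)\in C^\alpha$, hence $g_u\in C^\alpha$ and then $u\in C^{2+\alpha}$ from \eqref{313}. Only now are \emph{all} coefficients of $\mathcal{L}_u$ (and $f_u$) genuinely $C^\alpha$, and only now does Schauder give $\phi_u\in C^{2+\alpha}$ away from the corners. This is precisely the order the paper follows (first $W_p^2$, then Schauder).

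Second, and more seriously, your closing remark that ``no miraculous extra regularity is available [at the corners] and none is needed'' is mistaken: corner regularity is exactly what is needed. The claim $u\in C^{2+\alpha}([-1,1])$ is global, so $\partial_x^2 u=\lambda g_u$ must be H\"older continuous up to $x=\pm 1$, and $g_u(x)$ involves $|\partial_\eta\phi_u(x,1)|^2$, the trace of the gradient of $\phi_u$ on the top edge of $\Omega$ \emph{including} its endpoints $(\pm1,1)$. Local Schauder estimates on $\Omega$ away from the four corners give at most $\partial_\eta\phi_u(\cdot,1)\in C^\alpha_{\mathrm{loc}}(-1,1)$, which yields only $u\in C^{2+\alpha}_{\mathrm{loc}}(-1,1)$. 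To control the trace of the gradient up to the corners one must invoke a corner-regularity result. The paper does this by working on $\Omega(u)$, where the operator $\e^2\partial_x^2+\partial_z^2$ has \emph{constant} coefficients and the corner angles of $\Omega(u)$ are computed to lie in $(0,\pi/2]$; Grisvard's theorem then gives $\psi\in W_p^2(\Omega(u))$ globally for all $p<\infty$, hence $\psi\in C^{1+\alpha}(\overline{\Omega(u)})$ \emph{including the corners}, and only then does the Schauder estimate for \eqref{3} produce $u\in C^{2+\alpha}([-1,1])$. If you insist on staying on the rectangle $\Omega$ you must instead analyse the corners of $\Omega$ for the \emph{variable-coefficient} operator $\mathcal{L}_u$, which is more delicate than the constant-coefficient computation the paper chooses. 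Either way, the corner step cannot be skipped.

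Finally, the conclusion $u\in C^{3+\alpha}([-1,1])$ in your sketch overstates what even the repaired argument delivers globally: away from the corners $\psi$ is $C^{2+\alpha}$, but up to $x=\pm1$ one only has $\psi\in C^{1+\alpha}$, so $g_u\in C^\alpha([-1,1])$ and $u\in C^{2+\alpha}([-1,1])$, with $C^{3+\alpha}$ available only locally in $(-1,1)$.
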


\begin{proof}
From $\phi_u\in W_2^2(\Omega)$ and \eqref{Tu} we readily deduce $\psi=\phi_u\circ T_u\in W_2^2\big(\Omega(u)\big)$ and solves \eqref{1} in $\Omega(u)$. Moreover, since $\Omega(u)$ is a Lipschitz domain, the trace of $\psi$ is well defined as an element of $W_2^{1/2}\big(\partial\Omega(u)\big)$ according to \cite[Chapt.~2, Thm.~5.5]{Necas67} and \eqref{2} follows from $u(\pm1)=0$ and \eqref{24}. Also, since $\Omega$ satisfies the exterior cone condition at every point of its boundary and $u\in W_\infty^2(-1,1)$, it follows from \cite[Thm.~9.30]{GilbargTrudinger} that $\phi_u\in C(\bar{\Omega})$. Recalling that $T_u\in C\left( \overline{\Omega(u)} ; \bar{\Omega} \right)$, we deduce that $\psi\in C\left( \overline{\Omega(u)} \right)$. Finally, $\psi$ is even in $x$ due to Lemma~\ref{L32} and the fact that $u$ is even.

We next improve the regularity of $\psi$ with the help of \cite[Thm.~5.2.7]{Grisvard}. To this end, we note that, since $u\in W_\infty^2(-1,1)$, the boundary $\partial\Omega(u)$ of $\Omega(u)$ is a curvilinear polygon of class $C^{1,1}$ in the sense of \cite[Definition~1.4.5.1]{Grisvard} with four vertices $\{(-1,-1), (1,-1), (-1,0), (1,0) \}$ connected by $W_\infty^2$-smooth curves. In order to apply \cite[Thm.~5.2.7]{Grisvard}, we have to study more precisely the behaviour of the operator $\e^2\partial_x^2 + \partial_z^2$ at these four vertices. Actually, since the operator $\e^2\partial_x^2 + \partial_z^2$ coincides with its principal part and has constant coefficients, we only have to compute the measure $\omega_V$ of the angle at each vertex $V$ of $\Omega(u)$. Obviously, $\omega_{(\pm 1, -1)} = \pi/2$ while 
$$
\omega_{(\pm 1, 0)} = \arccos\left( \frac{\left( \partial_x u(\pm 1) \right)^2}{1 + \left( \partial_x u(\pm 1) \right)^2} \right) \in \left( 0 , \frac{\pi}{2} \right)\,.
$$
Since $\omega_V\in (0,\pi/2]$ for $V\in \{(-1,-1), (1,-1), (-1,0), (1,0) \}$, it follows from \cite[Thms.~5.2.2 and~5.2.7]{Grisvard} that no singularity occurs at the vertices and that $\psi\in W_p^2(\Omega(u))$ for all $p\in (2,\infty)$. The classical Sobolev embedding then implies that $\psi\in C^{1+\alpha}(\overline{\Omega(u)})$ for all $\alpha\in (0,1)$. Combining this regularity with that of $u$ gives that $x\mapsto \e^2 \left| \partial_x \psi(x,u(x)) \right|^2 + \left| \partial_z \psi(x,u(x)) \right|^2$ belongs to $C^\alpha([-1,1])$ and Schauder estimates applied to \eqref{33} guarantee that $u\in C^{2+\alpha}([-1,1])$ for $\alpha\in (0,1)$.

Furthermore, since $u\in C^{2+\alpha}([-1,1])$, it is easy to check that $\Omega(u)$ satisfies an exterior sphere condition at each boundary point $x\in\partial\Omega(u)$. So \cite[Thm.~6.13]{GilbargTrudinger} applied to \eqref{1}-\eqref{2} yields $\psi\in C\big(\overline{\Omega(u)}\big)\cap C^{2+\alpha}\big(\Omega(u)\big)$. Finally, as $\Gamma(u)$ surely  is a $C^{2+\alpha}$ boundary portion of  $\partial\Omega(u)$, we may invoke \cite[Lem.~6.18]{GilbargTrudinger} to deduce that $\psi\in C^{2+\alpha}\big(\Omega(u)\cup \Gamma(u)\big)$.
\end{proof}

The proof of Theorem~\ref{A} is thus complete. 

\section{The vanishing aspect ratio limit: Proof of Theorem~\ref{B}}\label{Sec3} 

We now prove Theorem~\ref{B} and thus consider a family of solutions $(u_\varepsilon,\psi_\varepsilon)_{\varepsilon\in (0,1)}$  to \eqref{1}-\eqref{4} satisfying the bounds \eqref{41} and \eqref{42} for some fixed $\lambda>0$. 

For $\e\in (0,1)$, we set 
$$
\phi_\e := \phi_{u_\varepsilon}=\psi_\e \circ T_{\ue}^{-1}
$$ 
with $T_{\ue}^{-1}$ from \eqref{Tuu} and 
$$
\Phi_\e(x,\eta) := \phi_\e(x,\eta)-\eta\ ,\quad (x,\eta)\in\bar{\Omega}\ .
$$ 
We first derive estimates on $\Phie$ which are uniform with respect to $\e\in (0,1)$. In the following, $K$ denotes an arbitrary positive constant depending only on $\lambda$ and $\kappa_0$.

\begin{lem}\label{le4.1}
There exists a positive constant $K_1$ depending only on $\lambda$ and $\kappa_0$ such that, for $\e\in (0,1)$, 
\begin{eqnarray}
\|\Phie\|_{L_\infty(\Omega)} & \le & 1\,, \label{4.3a} \\
\|\Phie\|_{L_2(\Omega)} & \le & K_1\ \sqrt{\e}\,, \label{4.4} \\
\|\partial_\eta \Phie\|_{L_2(\Omega)} & \le & K_1\ \varepsilon\,, \label{4.3} \\
\|\partial_\eta^2 \Phie\|_{L_2(\Omega)} & \le & K_1\ \varepsilon^2\,. \label{4.5}
\end{eqnarray}
\end{lem}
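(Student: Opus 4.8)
The plan is to work with the transformed potential $\Phi_\varepsilon = \phi_{u_\varepsilon} - \eta$ on the fixed rectangle $\Omega = (-1,1)\times(0,1)$, which solves $\mathcal{L}_{u_\varepsilon}\Phi_\varepsilon = -f_{u_\varepsilon}$ in $\Omega$ with $\Phi_\varepsilon = 0$ on $\partial\Omega$, where $f_{u_\varepsilon}$ is given by \eqref{f}. The crucial structural observation is that $f_{u_\varepsilon}$ carries an explicit factor $\varepsilon^2$ and, by \eqref{41}, \eqref{42} and the arguments of Lemma~\ref{L31}, the coefficients of $\mathcal{L}_{u_\varepsilon}$ are bounded uniformly in $\varepsilon$ with $\|f_{u_\varepsilon}\|_{L_\infty(\Omega)} \le K\varepsilon^2$; hence all estimates should come out with powers of $\varepsilon$ matching the statement. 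The bound \eqref{4.3a} is immediate: from \eqref{266} one has $\eta(1+u_\varepsilon(x)) \le \phi_{u_\varepsilon}(x,\eta) \le 1$, so $|\Phi_\varepsilon| = |\phi_{u_\varepsilon} - \eta| \le \eta|u_\varepsilon(x)| \le 1$ by \eqref{41}, and in fact $|\Phi_\varepsilon|\le 1-\kappa_0$.

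For the $L_2$-type estimates I would use the weak formulation and test with suitable multiples of $\Phi_\varepsilon$. Testing $\mathcal{L}_{u_\varepsilon}\Phi_\varepsilon = -f_{u_\varepsilon}$ against $\Phi_\varepsilon$ and writing $\mathcal{L}_{u_\varepsilon}$ in the divergence form displayed in the proof of Lemma~\ref{L32}, the quadratic form of the principal part controls $\varepsilon^2\|\partial_x\Phi_\varepsilon\|_{L_2}^2 + \|\partial_\eta\Phi_\varepsilon\|_{L_2}^2$ from below (using the ellipticity bound $d/t$ and the fact that the $\partial_\eta^2$-coefficient is $\ge 1/(1+u_\varepsilon)^2 \ge 1$); the lower-order drift terms and the right-hand side are handled by Cauchy--Schwarz and absorbed, using $\|f_{u_\varepsilon}\|_{L_2}\le K\varepsilon^2$ and $\|\Phi_\varepsilon\|_{L_\infty}\le 1$. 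This yields $\|\partial_\eta\Phi_\varepsilon\|_{L_2(\Omega)}\le K\varepsilon$, i.e. \eqref{4.3}; then \eqref{4.4} follows from Poincar\'e's inequality in the $\eta$-variable since $\Phi_\varepsilon(x,0)=\Phi_\varepsilon(x,1)=0$. For \eqref{4.5}, the idea is to isolate $\partial_\eta^2\Phi_\varepsilon$ from the equation: $\mathcal{L}_{u_\varepsilon}\Phi_\varepsilon = 0$ rewritten gives
$$
\frac{1+\varepsilon^2\eta^2(\partial_x u_\varepsilon)^2}{(1+u_\varepsilon)^2}\,\partial_\eta^2\Phi_\varepsilon = -\varepsilon^2\partial_x^2\Phi_\varepsilon + 2\varepsilon^2\eta\frac{\partial_x u_\varepsilon}{1+u_\varepsilon}\partial_x\partial_\eta\Phi_\varepsilon - \varepsilon^2\eta\Big[2\Big(\tfrac{\partial_x u_\varepsilon}{1+u_\varepsilon}\Big)^2 - \tfrac{\partial_x^2 u_\varepsilon}{1+u_\varepsilon}\Big]\partial_\eta\Phi_\varepsilon - f_{u_\varepsilon}\,,
$$
and every term on the right carries a factor $\varepsilon^2$. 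Bounding the $L_2$-norm of the right-hand side by $K\varepsilon^2(\|\partial_x^2\Phi_\varepsilon\|_{L_2} + \|\partial_x\partial_\eta\Phi_\varepsilon\|_{L_2} + \|\partial_\eta\Phi_\varepsilon\|_{L_2} + 1)$ and using the left-hand coefficient is $\ge 1$ gives $\|\partial_\eta^2\Phi_\varepsilon\|_{L_2}\le K\varepsilon^2$, provided one controls $\|\partial_x^2\Phi_\varepsilon\|_{L_2}$ and $\|\partial_x\partial_\eta\Phi_\varepsilon\|_{L_2}$.

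The main obstacle is precisely controlling those mixed and pure $x$-second derivatives: the a priori $W_2^2(\Omega)$-bound from Lemma~\ref{L32} depends on $\varepsilon$ (the constant $c_1 = c_1(r_0,\varepsilon)$ degenerates as $\varepsilon\to 0$), so one cannot simply invoke it. I would instead obtain these through energy identities in which the $\varepsilon^2$-weighting is kept explicit: differentiating the equation tangentially (in $x$) and testing appropriately, or testing the equation against $-\partial_x^2\Phi_\varepsilon$ and against $\partial_\eta^2\Phi_\varepsilon$, to get estimates of the form $\varepsilon^2\|\partial_x\partial_\eta\Phi_\varepsilon\|_{L_2}^2 + \|\partial_\eta^2\Phi_\varepsilon\|_{L_2}^2 \le K\varepsilon^4(\ldots)$ that close once combined. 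This is the delicate point because the ellipticity in the $x$-direction is only of order $\varepsilon^2$, so the natural energy estimates weight $\partial_x$-derivatives by $\varepsilon$; one must be careful that the lower-order terms and boundary contributions (the corners of $\Omega$ are right angles, so the flat-boundary difference-quotient technique applies near the sides) do not spoil the $\varepsilon$-power bookkeeping. Once the weighted second-derivative bounds are in hand, \eqref{4.5} follows by substitution into the pointwise identity above, and the four estimates of Lemma~\ref{le4.1} are established.
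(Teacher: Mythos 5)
Your treatment of \eqref{4.3a}, \eqref{4.3}, and \eqref{4.4} follows essentially the same route as the paper: the pointwise barrier \eqref{266} for \eqref{4.3a}; testing the equation for $\Phi_\e$ against $\Phi_\e$ in divergence form and exploiting that the quadratic form controls $\|\partial_\eta\Phi_\e\|_{L_2}^2$ uniformly (the coefficient of $\partial_\eta^2$ is $\ge 1$, and the cross and $\partial_x^2$ terms combine into $\e^2\|\partial_x\Phi_\e-\eta\frac{\partial_x u_\e}{1+u_\e}\partial_\eta\Phi_\e\|_{L_2}^2$) for \eqref{4.3}; and Poincar\'e in $\eta$ for \eqref{4.4}. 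One small caution on \eqref{4.3}: the ellipticity constant $d/t$ from Lemma~\ref{L32} degenerates like $\e^2$ and would \emph{not} on its own give a uniform lower bound on $\|\partial_\eta\Phi_\e\|_{L_2}^2$; what saves the estimate is precisely the explicit coefficient of $\partial_\eta^2$, which you do mention, so this part is fine.

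The genuine gap is in \eqref{4.5}. You correctly anticipate testing against $\zeta_\e:=\partial_\eta^2\Phi_\e$ and that the resulting estimate should have the shape $\e^2\|\partial_x\partial_\eta\Phi_\e\|_{L_2}^2+\|\zeta_\e\|_{L_2}^2\le K\e^4$, but you leave open how to deal with the problematic term $\e^2\int_\Omega\partial_x^2\Phi_\e\,\zeta_\e\,\rd(x,\eta)$, proposing to absorb it by separately controlling $\|\partial_x^2\Phi_\e\|_{L_2}$ via a companion energy estimate. This does not close as stated: the only lever to bound $\|\partial_x^2\Phi_\e\|_{L_2}$ is the $\e^2$-weighted ellipticity in $x$, and any bound that comes out carries an extra $\e^{-2}$, which exactly cancels the $\e^2$ prefactor and leaves you going in circles. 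The paper's argument avoids this entirely through the integration-by-parts identity
$$
\int_\Omega \partial_x^2\Phi_\e\,\partial_\eta^2\Phi_\e\,\rd(x,\eta)=\int_\Omega|\partial_x\partial_\eta\Phi_\e|^2\,\rd(x,\eta)\,,
$$
valid because $\Omega$ is a rectangle and $\Phi_\e\in W_{2,D}^2(\Omega)$ (the paper cites \cite[Lem.~4.3.1.2~\&~4.3.1.3]{Grisvard}; its appendix gives a self-contained proof via careful cut-offs near the corners). With this identity the term $\e^2\int\partial_x^2\Phi_\e\,\zeta_\e$ becomes $\e^2\|\omega_\e\|_{L_2}^2\ge 0$, and completing the square yields
$$
\int_\Omega\left[\frac{\zeta_\e^2}{(1+u_\e)^2}+\e^2\Big(\omega_\e-\eta\frac{\partial_x u_\e}{1+u_\e}\zeta_\e\Big)^2\right]\rd(x,\eta)\le \|f_\e\|_{L_\infty(\Omega)}\big(|\Omega|^{1/2}+\|\partial_\eta\Phi_\e\|_{L_2(\Omega)}\big)\|\zeta_\e\|_{L_2(\Omega)}\,,
$$
from which \eqref{4.5} follows at once using $\|f_\e\|_{L_\infty}\le K\e^2$ and \eqref{4.3}. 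Identifying and justifying this identity (which is where the rectangular geometry and the Dirichlet boundary condition are essential, and which is not a routine integration by parts for $W_2^2$ functions) is the missing idea in your proposal.
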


\begin{proof}
Since $0\le \phi_\e \le 1$ by \eqref{266}, we readily obtain \eqref{4.3a}. We next introduce 
$$
f_\e(x,\eta) := f_{u_\e}(x,\eta) = \e^2\ \eta\ \left( 2 \left( \frac{\partial_x u_\e(x)}{1+u_\e(x)} \right)^2 - \frac{\partial_x^2 u_\e(x)}{1+u_\e(x)} \right)\,, \qquad (x,\eta)\in \bar{\Omega}\,, 
$$
and observe that \eqref{41} and \eqref{42} ensure that
\begin{equation}
\|f_\e\|_{L_\infty(\Omega)} \le \left( \frac{2\e^2}{\kappa_0^4} + \frac{\e^2}{\kappa_0^2} \right)\,. \label{spirou}
\end{equation}
Now, it follows from \eqref{23a}-\eqref{24a} that 
\begin{equation*}
\begin{split}
\int_\Omega f_\e\ \Phie\ \rd (x,\eta) & =\,  \e^2\ \int_\Omega \left[ |\partial_x\Phie|^2 - 2\eta\ \frac{\partial_x u_\e}{1+u_\e}\ \partial_x\Phie\ \partial_\eta\Phie - 2\eta\ \partial_x \left( \frac{\partial_x u_\e}{1+u_\e} \right)\ \Phie\ \partial_\eta\Phie \right]\ \rd (x,\eta) \\
& \quad\, +  \int_\Omega \left[ \frac{1+\e^2\eta^2\ (\partial_x u_\e)^2}{(1+u_\e)^2}\ |\partial_\eta\Phie|^2 + 2\eta\e^2\ \left( \frac{\partial_x u_\e}{1+u_\e} \right)^2\ \Phie\ \partial_\eta\Phie \right]\ \rd (x,\eta) \\
& \quad\, -  \e^2\ \int_\Omega \eta\ \left( 2 \left( \frac{\partial_x u_\e}{1+u_\e} \right)^2 - \frac{\partial_x^2 u_\e}{1+u_\e} \right)\ \Phie\ \partial_\eta\Phie\ \rd (x,\eta) \\
& =\,  \e^2\ \int_\Omega \left( \partial_x\Phie - \eta\ \frac{\partial_x u_\e}{1+u_\e}\ \partial_\eta\Phie \right)^2\ \rd (x,\eta) + \int_\Omega \frac{|\partial_\eta\Phie|^2}{(1+u_\e)^2}\ \rd (x,\eta) \\
& \quad\, +  \e^2\ \int_\Omega \eta\ \left( 2 \left( \frac{\partial_x u_\e}{1+u_\e} \right)^2 - \frac{\partial_x^2 u_\e}{1+u_\e} \right)\ \Phie\ \partial_\eta\Phie\ \rd (x,\eta)\,.
\end{split}
\end{equation*}
We deduce from \eqref{41}, \eqref{42}, \eqref{4.3a}, and the above identity that
\begin{eqnarray*}
\int_\Omega f_\e\ \Phie\ \rd (x,\eta) & \ge & \|\partial_\eta\Phie\|_{L_2(\Omega)}^2 - \left( \frac{2\e^2\ |\Omega|^{1/2}}{\kappa_0^4} + \frac{\e^2\ |\Omega|^{1/2}}{\kappa_0^2} \right)\ \|\partial_\eta\Phie\|_{L_2(\Omega)} \\
& \ge & (1-\e^2)\ \|\partial_\eta\Phie\|_{L_2(\Omega)}^2 - K\ \e^2\,,
\end{eqnarray*}
while \eqref{4.3a} and \eqref{spirou} ensure that
$$
\int_\Omega f_\e\ \Phie\ \rd (x,\eta) \le \left( \frac{2\e^2}{\kappa_0^4} + \frac{\e^2}{\kappa_0^2} \right)\ |\Omega|\,.
$$
Combining the above two inequalities gives \eqref{4.3}. Next, thanks to \eqref{24a} and \eqref{4.3a}, we have
\begin{eqnarray*}
\int_\Omega |\Phie(x,\eta)|^2\ \rd (x,\eta) & = & \int_\Omega \left| \int_\eta^1 \partial_\eta \Phie(x,y)\ \rd y \right|^2\ \rd (x,\eta) \le \int_\Omega \int_0^1 |\partial_\eta\Phie(x,y)|^2\ \rd y\, \rd (x,\eta) \\
& \le & |\Omega|\ \|\partial_\eta\Phie\|_{L_2(\Omega)}^2\,,
\end{eqnarray*}
and \eqref{4.4} readily follows from the previous inequality and \eqref{4.3}.

Finally, setting $\zeta_\e:=\partial_\eta^2\Phie$ and \mbox{$\omega_\e:=\partial_x\partial_\eta\Phie$}, we infer from \eqref{23a}-\eqref{24a} that 
$$
\int_\Omega \left[ \frac{1+\e^2\eta^2\ (\partial_x u_\e)^2}{(1+u_\e)^2}\ \zeta_\e^2 + \e^2\ \partial_x^2\Phie\ \zeta_\e - 2\eta\e^2\ \frac{\partial_x u_\e}{1+u_\e}\ \zeta_\e\ \omega_\e \right]\ \rd (x,\eta) = \int_\Omega f_\e\ \left( 1 - \partial_\eta\Phie \right)\ \zeta_\e\ \rd (x,\eta)\,.
$$
Since
$$
\int_\Omega \partial_x^2\Phie\ \zeta_\e\ \rd (x,\eta) = \int_\Omega \omega_\e^2\ \rd (x,\eta)
$$
by {\cite[Lem.~4.3.1.2 $\&$~4.3.1.3]{Grisvard}}, the above identity also reads
$$
\int_\Omega \left[ \frac{\zeta_\e^2}{(1+u_\e)^2} + \e^2\ \left( \omega_\e - \eta\ \frac{\partial_x u_\e}{1+u_\e}\ \zeta_\e \right)^2 \right]\ \rd (x,\eta) = \int_\Omega f_\e\ \left( 1 - \partial_\eta\Phie \right)\ \zeta_\e\ \rd (x,\eta)\,.
$$
Consequently, owing to \eqref{41}, \eqref{4.3} and \eqref{spirou}, we deduce from the above identity that
\begin{eqnarray*}
\|\zeta_\e\|_{L_2(\Omega)}^2 & \le & \int_\Omega \frac{\zeta_\e^2}{(1+u_\e)^2}\ \rd (x,\eta) \le \|f_\e\|_{L_\infty(\Omega)}\ \left( |\Omega|^{1/2} + \|\partial_\eta\Phie\|_{L_2(\Omega)} \right)\ \|\zeta_\e\|_{L_2(\Omega)} \\
& \le & K\ \e^2\ \|\zeta_\e\|_{L_2(\Omega)}\,,
\end{eqnarray*} 
whence \eqref{4.5}.
\end{proof}

In order to pass to the limit as $\e\to 0$ in \eqref{33}, we need to control the trace of $\partial_\eta\Phie$ on $(-1,1)\times \{1\}$. For that purpose, the following lemma will be adequate. 

\begin{lem}\label{le4.2}
Given $\vartheta\in W_2^2(\Omega)$, we have
$$
\|\partial_\eta\vartheta(.,1)\|_{L_2(-1,1)} \le \sqrt{2}\ \left( \|\partial_\eta\vartheta\|_{L_2(\Omega)}  + \|\partial_\eta^2\vartheta\|_{L_2(\Omega)} \right)\,.
$$
\end{lem}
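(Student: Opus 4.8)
The plan is to prove this as a straightforward trace inequality using the fundamental theorem of calculus in the $\eta$-variable combined with a Cauchy--Schwarz estimate, exactly the kind of one-dimensional computation that underlies the standard trace theorem but here made quantitative with explicit constants. First I would reduce to a density argument: it suffices to establish the bound for $\vartheta$ smooth on $\bar\Omega$, since $C^\infty(\bar\Omega)$ is dense in $W_2^2(\Omega)$, both sides of the claimed inequality are continuous with respect to the $W_2^2(\Omega)$-norm (the left-hand side via the continuity of the trace operator $W_2^2(\Omega)\to W_2^{1/2}((-1,1)\times\{1\})\hookrightarrow L_2(-1,1)$ already used in the proof of Lemma~\ref{L34}), and so the general case follows by passing to the limit.

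For smooth $\vartheta$, I would fix $x\in(-1,1)$ and write, for any $\eta\in(0,1)$,
\begin{equation*}
\partial_\eta\vartheta(x,1) = \partial_\eta\vartheta(x,\eta) + \int_\eta^1 \partial_\eta^2\vartheta(x,y)\,\rd y\,,
\end{equation*}
so that $|\partial_\eta\vartheta(x,1)|^2 \le 2|\partial_\eta\vartheta(x,\eta)|^2 + 2\left(\int_0^1 |\partial_\eta^2\vartheta(x,y)|\,\rd y\right)^2 \le 2|\partial_\eta\vartheta(x,\eta)|^2 + 2\int_0^1 |\partial_\eta^2\vartheta(x,y)|^2\,\rd y$ by Cauchy--Schwarz on the interval $(0,1)$ of unit length. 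Then I would integrate this inequality with respect to $\eta$ over $(0,1)$ (the left-hand side being independent of $\eta$), obtaining
\begin{equation*}
|\partial_\eta\vartheta(x,1)|^2 \le 2\int_0^1 |\partial_\eta\vartheta(x,\eta)|^2\,\rd\eta + 2\int_0^1 |\partial_\eta^2\vartheta(x,y)|^2\,\rd y\,,
\end{equation*}
and finally integrate over $x\in(-1,1)$ to get $\|\partial_\eta\vartheta(\cdot,1)\|_{L_2(-1,1)}^2 \le 2\|\partial_\eta\vartheta\|_{L_2(\Omega)}^2 + 2\|\partial_\eta^2\vartheta\|_{L_2(\Omega)}^2$. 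Taking square roots and using $\sqrt{a+b}\le\sqrt a+\sqrt b$ yields the asserted bound with the constant $\sqrt2$.

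There is no serious obstacle here; the only point requiring a little care is the justification of the density/limiting step, namely that the trace $\partial_\eta\vartheta(\cdot,1)$ depends continuously on $\vartheta\in W_2^2(\Omega)$ in the $L_2(-1,1)$-topology so that the inequality, valid on a dense subset, extends to all of $W_2^2(\Omega)$. This is precisely the content of the trace theorem \cite[Chapt.~2, Thm.~5.4]{Necas67} already invoked above, so it may simply be cited. One could alternatively avoid density altogether by noting that the computation above is valid for any $\vartheta\in W_2^2(\Omega)$ after restricting to almost every line $\{x\}\times(0,1)$, on which $\eta\mapsto\partial_\eta\vartheta(x,\eta)$ is absolutely continuous with derivative $\partial_\eta^2\vartheta(x,\cdot)\in L_2(0,1)$ by Fubini's theorem, so the fundamental theorem of calculus applies directly; this is the route I would favour in the write-up as it keeps the argument self-contained.
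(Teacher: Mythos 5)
Your proof is correct and follows essentially the same route as the paper: the same fundamental-theorem-of-calculus identity $\partial_\eta\vartheta(x,1)=\partial_\eta\vartheta(x,\eta)+\int_\eta^1\partial_\eta^2\vartheta(x,y)\,\rd y$, averaged in $\eta$ over $(0,1)$ with Cauchy--Schwarz, then integration in $x$, with the general case recovered by density of $C^\infty(\bar\Omega)$ in $W_2^2(\Omega)$ (your squaring before rather than after the $\eta$-integration, and your extra remarks on the limiting step, are only minor variations).
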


\begin{proof} We first assume that $\vartheta\in C^\infty(\bar{\Omega})$. For $x\in (-1,1)$ and $\eta\in (0,1)$, we have
$$
\partial_\eta\vartheta(x,1) = \partial_\eta\vartheta(x,\eta) + \int_\eta^1 \partial_\eta^2\vartheta(x,y)\ \rd y\,.
$$
Integrating this identity with respect to $\eta$ over $(0,1)$ gives
$$
\partial_\eta\vartheta(x,1) = \int_0^1 \partial_\eta\vartheta(x,\eta)\ \rd \eta + \int_0^1 \int_\eta^1 \partial_\eta^2\vartheta(x,y)\ \rd y\, \rd \eta\,,
$$
and thus
$$
\left| \partial_\eta\vartheta(x,1) \right| \le \|\partial_\eta\vartheta(x,.)\|_{L_2(0,1)} + \|\partial_\eta^2\vartheta(x,.)\|_{L_2(0,1)}\,.
$$
Raising both sides of the above identity to the square and integrating with respect to $x$ over $(-1,1)$ gives the claimed estimate for smooth functions. The general case follows by a density argument, see, e.g.,  {\cite[{Thm.~3.18}]{Adams_SobolevSpaces}} or \cite[Chapt.~2, Thm.~3.1]{Necas67}.
\end{proof}

A control on the trace of $\partial_\eta\Phie$ on $(-1,1)\times \{1\}$ follows at once from Lemma~\ref{le4.1} and Lemma~\ref{le4.2}.

\begin{cor}\label{cor4.3}
There exists a positive constant $K_2$ depending only on $\lambda$ and $\kappa_0$ such that, for $\e\in (0,1)$, 
\begin{equation}
\|\partial_\eta\Phie(.,1)\|_{L_2(-1,1)} \le K_2\ \e\,. \label{4.6}
\end{equation}
\end{cor}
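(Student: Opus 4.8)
The plan is to combine the interior $L_2$-estimates on $\partial_\eta \Phie$ and $\partial_\eta^2 \Phie$ from Lemma~\ref{le4.1} with the one-dimensional trace inequality of Lemma~\ref{le4.2}, applied to the function $\vartheta = \Phie \in W_2^2(\Omega)$. Indeed, $\Phie$ is admissible for Lemma~\ref{le4.2} since $\phi_{u_\e}$, and hence $\Phie$, belongs to $W_2^2(\Omega)$ by Lemma~\ref{L32}, and $u_\e$ satisfies the uniform bounds \eqref{41} and \eqref{42} so that the constant $K_1$ in Lemma~\ref{le4.1} is independent of $\e\in(0,1)$.

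Concretely, I would write
$$
\|\partial_\eta\Phie(\cdot,1)\|_{L_2(-1,1)} \le \sqrt{2}\ \left( \|\partial_\eta\Phie\|_{L_2(\Omega)} + \|\partial_\eta^2\Phie\|_{L_2(\Omega)} \right)
$$
by Lemma~\ref{le4.2}, and then bound the right-hand side using \eqref{4.3} and \eqref{4.5}, which give $\|\partial_\eta\Phie\|_{L_2(\Omega)} \le K_1 \e$ and $\|\partial_\eta^2\Phie\|_{L_2(\Omega)} \le K_1 \e^2 \le K_1 \e$ for $\e\in(0,1)$. Therefore
$$
\|\partial_\eta\Phie(\cdot,1)\|_{L_2(-1,1)} \le \sqrt{2}\ \left( K_1 \e + K_1 \e^2 \right) \le 2\sqrt{2}\ K_1\ \e\,,
$$
so the claim \eqref{4.6} holds with $K_2 := 2\sqrt{2}\, K_1$, which depends only on $\lambda$ and $\kappa_0$ through $K_1$. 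This is essentially a two-line deduction once the two preceding results are in hand.

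There is no real obstacle here: the statement is by design an immediate corollary, and the only mild point to keep track of is that $\e^2 \le \e$ on $(0,1)$ so that both terms carry a factor of $\e$ (the square power in \eqref{4.5} is simply wasted). If one wanted to be slightly sharper one could keep the $\e^2$ term separate and write $K_2 = \sqrt{2} K_1 (1 + \e) \le 2\sqrt{2} K_1$, but there is no benefit in doing so for the subsequent argument. The genuinely substantive work — the weighted energy identities producing the powers of $\e$ — has already been carried out in Lemma~\ref{le4.1}, and the trace estimate itself is the content of Lemma~\ref{le4.2}; the corollary merely records their combination in the form that will be used when passing to the limit $\e\to 0$ in the Neumann-type term $|\partial_\eta\phi_\e(\cdot,1)|^2$ appearing in \eqref{33}.
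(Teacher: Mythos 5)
Your argument is correct and coincides with the paper's own (the paper simply states that Corollary~\ref{cor4.3} ``follows at once from Lemma~\ref{le4.1} and Lemma~\ref{le4.2}''): apply the trace inequality of Lemma~\ref{le4.2} to $\vartheta=\Phie$ and then invoke \eqref{4.3} and \eqref{4.5}, using $\e^2\le\e$ on $(0,1)$. Nothing further is needed.
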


\begin{proof}[Proof of Theorem~\ref{B}] By \eqref{42} and the Arzel\`a-Ascoli theorem, there are a sequence $(\e_k)_{k\ge 1}$ and a function $u_0\in W_\infty^2(-1,1)$ such that $\e_k\to 0$ in $(0,1)$  and
\begin{eqnarray}
u_{\e_k} \longrightarrow u_0 \quad & \text{ in } & \quad W_\infty^1(-1,1)\,, \label{4.7} \\
u_{\e_k} \stackrel{*}{\rightharpoonup} u_0 \quad & \text{ in } & \quad W_\infty^2(-1,1)\,. \label{4.8}
\end{eqnarray}
Owing to \eqref{41}, \eqref{4.6}, \eqref{4.7}, and the definition of $\Phie$, we have
\begin{equation}
0\ge u_0(x)\ge \kappa_0-1\,, \qquad x\in [-1,1]\,, \label{4.9a}
\end{equation}
and
\begin{equation}
\frac{1 + \e_k^2\ \left( \partial_x u_{\e_k} \right)^2}{\left( 1+u_{\e_k} \right)^2}\ \left| \partial_\eta\phi_{\e_k}(.,1) \right|^2 \longrightarrow \frac{1}{(1+u_0)^2} \quad\text{ in }\quad L_1(-1,1)\,. \label{4.9}
\end{equation}
Combining \eqref{4.8} and \eqref{4.9}, we may pass to the limit as $\e_k\to 0$ in \eqref{33} and conclude that $u_0$ is a solution to the small aspect ratio equation \eqref{uu}, where  $u_0(\pm 1)=0$ is guaranteed by \eqref{34} and \eqref{4.7}. 

Also, by \eqref{4.4},
$$
\lim_{k\to\infty} \int_\Omega \left| \phi_{\e_k}(x,\eta) -\eta \right|^2\ \rd (x,\eta) = 0\,,
$$
and, since
\begin{eqnarray*}
\int_\Omega \left| \phi_{\e_k}(x,\eta) -\eta \right|^2\ \rd (x,\eta) & = & \int_{-1}^1 \int_{-1}^{u_{\e_k}(x)} \left| \psi_{\e_k}(x,z) - \frac{1+z}{1+u_{\e_k}(x)}\right|^2\ \frac{\rd z\,\rd x}{1+u_{\e_k}(x)} \\
& \ge &  \int_{-1}^1 \int_{-1}^{u_{\e_k}(x)} \left| \psi_{\e_k}(x,z) - \frac{1+z}{1+u_{\e_k}(x)}\right|^2\ \rd z\,\rd x\,,
\end{eqnarray*}
we readily obtain \eqref{411}, where $\psi_0$ is given by \eqref{sar} with $u=u_0$.
\end{proof}

\section*{Acknowledgments}

{We thank Matthieu Hillairet} for helpful discussions. This work was performed while Ch.W. was a visitor at the Institut de Math\'{e}matiques de Toulouse, Universit\'{e} Paul Sabatier. The kind hospitality is gratefully acknowledged.


\bibliographystyle{abbrv}
\bibliography{MEMS}

\end{document}